\documentclass[preprint,12pt]{elsarticle1}
\usepackage{subfig}
\usepackage{graphicx}
\usepackage{caption,color}   
\usepackage{CJK} 
\usepackage{amssymb}
\usepackage{amsthm}
\usepackage{amsmath}
\usepackage{epic}
\usepackage{setspace}
\usepackage{float}
\usepackage{multirow}
\usepackage{hyperref}
\newtheorem{thm}{Theorem}[section]
\newtheorem{cor}[thm]{Corollary}

\newtheorem{lem}[thm]{Lemma}

\newtheorem{defi}[thm]{Definition}
\newtheorem{claim}{Claim}

\numberwithin{equation}{section}
\journal{}
\begin{document}
\begin{spacing}{1.15}
\begin{frontmatter}
\title{\textbf{~\\The high order spectral extrema of $2K_r$-free graphs}}

\author[]{Changjiang Bu\corref{mycorrespondingauthor}}
\ead{buchangjiang@hrbeu.edu.cn}
\author[]{Yifan Sun}
\author[]{Haotian Zeng}
\address{School of Mathematical Sciences, Harbin Engineering University, Harbin 150001, PR China}
\cortext[mycorrespondingauthor]{Corresponding author}


\begin{abstract}
\indent 
   In this paper, we determine the graphs with maximum value of the sum number from $k$-clique spectral radius  to $(2r-1)$-clique spectral radius among all $2K_{r}$-free graphs on $n$ vertices for $ r\le k$ and large $n$. We also determine the graphs with maximum $3$-clique spectral radius among all $2K_{3}$-free graphs on $n$ vertices.  Our results are spectral versions of some results on generalized Turán numbers.
\end{abstract}

\begin{keyword}
Clique tensor, Clique spectral radius,  Disjoint cliques\\
AMS Classification:  05C35; 05C50
\end{keyword}
\end{frontmatter}

\section{Introduction}
\label{Introduction }
\indent The graphs considered in this paper are undirected and simple. 
For a graph  $G=(V(G),E(G))$,  let $N_{G}(i_1,\cdots,i_{s})= N_{G}(i_1)\cap N_{G}(i_2)\cap \cdots \cap N_{G}(i_s) $ where $N_G(i_j)$ is  the neighborhood of the vertex $i_j\in V(G)$ 
for $j=1,\cdots,s$. For $U\subseteq V(G)$, let $G[U]$ denote the induced subgraph  of $G$ by $U$.
 An \emph{$r$-clique} of $G$ is a subset of $V(G)$ such that the induced subgraph of this subset is  a complete graph with $r$ vertices.
Let $kG$  be the disjoint union of $k$ copies of $G$. Let $G \vee H$ be the graph which is obtained by  joining every pair $v_{1}$, $v_{2}$ of vertices with ${{v}_{1}}\in V\left( G \right)$, ${{v}_{2}}\in V\left( H \right)$.

\indent For two graphs $G$ and $F$, $G$ is called $F$-\emph{free} if $G$ does not contain  $F$ as a subgraph. The Tur\'{a}n number $ex(n,F)$ is the maximum number of edges of an $F$-free graph on $n$ vertices. Let $T_r(n)$ be the complete $r$-partite graph on $n$ vertices without two part sizes differing by more than one. 
In 1941, Tur\'{a}n \cite{turan1941} gave the following classical theorem.

\begin{thm}\emph{\cite{turan1941}}
 Let $G$ be a  $K_{r+1}$-free graph on $n$ vertices with $ex(n,K_{r+1})$ edges. Then $G\cong T_r(n)$.
\end{thm}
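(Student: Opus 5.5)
We prove Turán's theorem by Zykov symmetrization. The aim is to show that an extremal $K_{r+1}$-free graph must be complete multipartite. Once that is known, a simple balancing argument identifies it as $T_r(n)$.

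\emph{Step 1 (non-adjacency is an equivalence relation).} Let $G$ be $K_{r+1}$-free on $n$ vertices with $e(G)=ex(n,K_{r+1})$. We claim that non-adjacency is transitive in $G$. Suppose not. Then there are vertices $u,v,w$ with $uv,vw\notin E(G)$ but $uw\in E(G)$. We split into two cases according to degrees.

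If $d(v)<d(u)$, form $G'$ by deleting $v$ and adding a twin $u'$ of $u$, meaning $N_{G'}(u')=N_G(u)$ and $uu'\notin E(G')$. A twin cannot create a $K_{r+1}$, because any clique contains at most one of $u,u'$, and swapping $u'$ for $u$ maps a clique of $G'$ to a clique of $G$. So $G'$ is $K_{r+1}$-free. Since $v\notin N_G(u)$, we get $e(G')=e(G)-d(v)+d(u)>e(G)$, a contradiction. The case $d(v)<d(w)$ is symmetric.

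Otherwise $d(v)\ge d(u)$ and $d(v)\ge d(w)$. Now delete both $u$ and $w$ and replace them by two twins of $v$. The edges removed number $d(u)+d(w)-1$, since the edge $uw$ is counted twice. The edges added number $2d(v)$, since $u,w\notin N(v)$. Thus $e$ increases by at least $1$, again a contradiction.

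\emph{Step 2 (complete multipartite structure).} By Step 1, $G$ is complete multipartite, with parts given by the non-adjacency classes. Choosing one vertex from each part gives a clique, so $K_{r+1}$-freeness forces at most $r$ parts. If there are fewer than $r$ parts and some part has at least two vertices, splitting that part strictly increases the edge count while keeping at most $r$ parts. Assuming $n\ge r$, this shows $G$ has exactly $r$ parts. (For $n<r$ the statement should be read with empty parts allowed, in which case $G=K_n$.)

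\emph{Step 3 (balancing).} Suppose two parts have sizes $a\ge b+2$. Moving one vertex from the larger part to the smaller changes the edge count by $(a-1)-b>0$. This contradicts maximality, so all part sizes differ by at most one, and therefore $G\cong T_r(n)$.

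The main obstacle is Step 1: the replacement by twins must be checked carefully, both to confirm that it preserves $K_{r+1}$-freeness and that the edge count strictly increases. The second case is the delicate one, because of the correction term $-1$ coming from the edge $uw$.
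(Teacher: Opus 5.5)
Your proof is correct and complete. It is the standard Zykov symmetrization argument, and each step holds. The twin replacement preserves $K_{r+1}$-freeness because a clique meets a set of pairwise non-adjacent twins in at most one vertex. The count $2d(v)-(d(u)+d(w)-1)\ge 1$ settles the second case of Step~1, and splitting plus balancing (net gain $(a-1)-b>0$) finish the argument.

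The paper does not prove this statement; it quotes it from \cite{turan1941} as background, so there is no proof to compare against directly. However, Step~1 of Claim~\ref{claim2} in the paper is essentially your Step~1 carried over to the clique-tensor setting:
\begin{itemize}
\item it uses the same triple $u\not\sim v$, $v\not\sim w$, $u\sim w$ and the same two-case split;
\item the weight $W_{G'}(\cdot,x)$ plays the role of the degree, and the variational formula of Lemma~\ref{lem1} replaces edge counting;
\item the positive sum over $k$-cliques containing $\{u,w\}$ plays the role of your $-1$ correction from the edge $uw$.
\end{itemize}
Your Steps~2 and~3 likewise match Case~1.1 and Step~2 there, except that the paper balances the product $\prod n_i$ rather than $\sum_{i<j}n_in_j$. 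The one extra burden the paper carries is that twinning does not automatically preserve $2K_r$-freeness (twinning the apex of $K_1\vee T_{r-1}(n-1)$ already creates $2K_r$). The paper recovers this from the common kernel of Claim~\ref{claim1}, whereas in your setting $K_{r+1}$-freeness survives twinning for free.
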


For two graphs $H$ and $F$, the generalized Tur\'{a}n number is the maximum number of copies in subgraph $H$ of an $F$-free graph on $n$ vertices, 
denoted by $ex(n,H,F)$. Zykov \cite{Zykov1949} and  Erd\H{o}s \cite{Erdos1962}  determined the graphs with maximum number of copies of $K_s$ among all $K_{r+1}$-free graphs for $2\le s \le r$. In 2016, Alon and Shikhelman \cite{Alon} systematically studied the function $ex(n, H, F)$. Other research on the generalized Tur\'{a}n numbers can be referred to \cite{Fang, Gerbner1, Luo, Ma}.

  In 2022, Yuan and Yang {\cite{yuan20221}}  determined the graphs with the maximum number of copies of $K_r$ among all $2K_r$-free graphs on $n$ vertices for $n\ge 3r^5$. They also determined the graphs with the maximum number of copies of $K_3$ among all $2K_3$-free graphs on $n$ vertices for all $n$. 
\begin{thm}\label{yang}\emph{\cite{yuan20221}}
 Let $G$ be a  $2K_{r}$-free graph on $n$ vertices with $ex(n,K_{r},2K_{r})$ copies of $K_r$, where $r\ge 3$. If $n\ge 3r^5$, then $G\cong K_1\vee T_{r-1}(n-1)$.
\end{thm}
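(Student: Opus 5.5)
We prove the theorem of Yuan and Yang quoted above: a $2K_r$-free graph $G$ on $n\ge 3r^5$ vertices with the maximum number of $r$-cliques is $K_1\vee T_{r-1}(n-1)$. The plan is a stability argument: locate a vertex that lies in every $r$-clique, delete it, and apply the Zykov--Erd\H{o}s theorem to what remains.

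\textbf{Lower bound and starting point.} The graph $K_1\vee T_{r-1}(n-1)$ is $2K_r$-free, because every $r$-clique must use the apex (the rest is $(r-1)$-partite). It contains exactly $t_{r-1}(n-1)$ copies of $K_r$, where $t_{r-1}(m)$ denotes the number of $K_{r-1}$'s in $T_{r-1}(m)$. This is of order $\Theta(n^{r-1})$. Let $G$ be extremal, so $N(K_r,G)\ge t_{r-1}(n-1)$. Since $G$ is $2K_r$-free, any two $r$-cliques of $G$ intersect, so the $r$-cliques form an intersecting family of $r$-sets.

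\textbf{Step 1: a vertex in all cliques.} We show that some vertex $v$ lies in every $r$-clique of $G$, using a sunflower / Erd\H{o}s--Ko--Rado-type argument for intersecting families. Suppose no vertex meets all cliques. The plan is to show that the number of cliques is then at most $O(n^{r-2})$, which is smaller than $t_{r-1}(n-1)$ once $n\ge 3r^5$.
\begin{itemize}
\item For the bound, choose a minimum cover (transversal) $T$ of the clique family. Any single clique is a cover, so $|T|\le r$.
\item Every clique meets $T$. A clique containing exactly one vertex $x\in T$ is then counted via its remaining $r-1$ vertices.
\item Because $x$ is not a cover, some clique $Q$ avoids $x$. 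Every clique through $x$ must meet $Q$, so it contains $x$, a vertex of $Q\setminus\{x\}$, and $r-2$ further vertices. This gives at most $r\binom{n}{r-2}$ cliques through $x$.
\item Summing over $x\in T$ gives $N(K_r,G)\le r^2\binom{n}{r-2}$.
\end{itemize}
Since $t_{r-1}(n-1)\ge \bigl(\frac{n-1}{r-1}\bigr)^{r-1}$, we get a contradiction for $n\ge 3r^5$. Some care with the explicit constants will be needed, but the polynomial threshold $3r^5$ leaves ample room.

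\textbf{Step 2: reduce to Zykov--Erd\H{o}s.} With $v$ in every $r$-clique, each $r$-clique is $v$ together with an $(r-1)$-clique in $G[N(v)]$. Hence
\[
N(K_r,G)=N\bigl(K_{r-1},G[N(v)]\bigr)\le N\bigl(K_{r-1},G-v\bigr).
\]
Next we argue that $G-v$ is $K_r$-free. Any $r$-clique $Q$ of $G-v$ avoids $v$, which contradicts Step 1, since Step 1 shows $v$ meets every $r$-clique of $G$. Therefore, by Zykov--Erd\H{o}s, $N(K_{r-1},G-v)\le t_{r-1}(n-1)$, with equality only when $G-v\cong T_{r-1}(n-1)$. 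Combined with the lower bound, equality must hold throughout. This forces $G-v\cong T_{r-1}(n-1)$ and forces every $(r-1)$-clique of $G-v$ to lie in $N(v)$. Every vertex of $T_{r-1}(n-1)$ lies in some $(r-1)$-clique, so $N(v)=V\setminus\{v\}$ and $G\cong K_1\vee T_{r-1}(n-1)$.

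\textbf{Main obstacle.} The key difficulty is Step 1, namely bounding intersecting clique families without a common vertex by $O(n^{r-2})$ with constants explicit enough to fit $n\ge 3r^5$. If the transversal argument above turns out to be too lossy, a fallback is to first delete vertices of low clique-degree and then apply the argument to the remaining dense part.
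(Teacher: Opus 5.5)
The paper gives no proof of this theorem; it is quoted from Yuan and Yang. So your argument has to stand on its own, and it has a genuine quantitative gap in Step~1.

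\textbf{Step~2 is fine.} Once a vertex $v$ lies in every $r$-clique, $G-v$ is $K_r$-free, and Zykov--Erd\H{o}s with its uniqueness clause gives $G\cong K_1\vee T_{r-1}(n-1)$.

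\textbf{The gap in Step~1.} This is not a matter of tidying constants. Your bound $N(K_r,G)\le r^2\binom{n}{r-2}\le r^2n^{r-2}/(r-2)!$ falls below $t_{r-1}(n-1)\approx\bigl(\frac{n}{r-1}\bigr)^{r-1}$ only once $n\gtrsim r^2(r-1)^{r-1}/(r-2)!$. That quantity grows like $r^{5/2}\mathrm{e}^{r}$, i.e.\ exponentially in $r$, so $3r^5$ does not leave ample room. For $r=9$ and $n=3\cdot 9^5$, the ratio $t_8(n-1)\big/\bigl(81\binom{n}{7}\bigr)$ is about $0.66$. For every $r\ge 9$ your bound gives no contradiction at $n=3r^5$; the argument only goes through for $3\le r\le 8$.

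The loss comes from letting the $r-2$ completing vertices be an arbitrary $(r-2)$-set, ignoring that they must span a clique in a heavily restricted graph. The fallback you mention (deleting low clique-degree vertices) is not carried out. Separately, $t_{r-1}(n-1)\ge\bigl(\frac{n-1}{r-1}\bigr)^{r-1}$ is backwards by AM--GM. The usable bound is $t_{r-1}(n-1)>\bigl(\frac{n-r}{r-1}\bigr)^{r-1}$, since each part has more than $\frac{n-r}{r-1}$ vertices.

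\textbf{How to repair it.} Bound the cliques through a fixed vertex by a generalized Turán number instead of by $\binom{n}{r-2}$.
\begin{enumerate}
\item Fix an $r$-clique $Q_0$; every $r$-clique meets it. For $x\in Q_0$, take an $r$-clique $Q_x$ avoiding $x$.
\item Suppose $G[N(x)]$ contained $r+1$ pairwise disjoint $(r-1)$-cliques $R_1,\dots,R_{r+1}$. Then $Q_x$ would meet every $\{x\}\cup R_i$, hence every $R_i$, forcing $|Q_x|\ge r+1$. So $G[N(x)]$ is $(r+1)K_{r-1}$-free.
\item For $r\ge 4$, apply Lemma \ref{lem6} with clique size $r-1$ and $k=r$, after padding $G[N(x)]$ with isolated vertices to $n-1\ge 2r(r-1)^3$ vertices. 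This shows $x$ lies in at most $r(r-1)^2\bigl(\frac{n}{r-2}\bigr)^{r-2}$ cliques of order $r$.
\item Sum over $x\in Q_0$ and use $\bigl(1+\frac{1}{r-2}\bigr)^{r-2}<\mathrm{e}$. The total is below $\bigl(\frac{n-r}{r-1}\bigr)^{r-1}<t_{r-1}(n-1)$ whenever $n-r^2+r\ge \mathrm{e}\,r^2(r-1)^3$. This holds for $n\ge 3r^5$ because $3>\mathrm{e}$.
\item For $r=3$, your crude bound $9n<t_2(n-1)$ already suffices.
\end{enumerate}
This is the same device the paper uses in Claim \ref{claim1}: a common neighbourhood with no $r+1$ disjoint cliques, followed by Lemma \ref{lem6}. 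With Step~1 repaired this way, the rest of your proof goes through.
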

\begin{thm}\label{yang1}\emph{\cite{yuan20221}}
 Let $n\ge 7$. Then
  \[ex(n,K_3,2K_3)=max\left\{{3n-8, \left \lfloor \frac{(n-1)^2 }{4} 
  \right \rfloor   }\right\}.\]
  and the extremal graphs are $K_3\vee (n-3)K_1$ and $K_1\vee T_2(n-1)$, respectively.
\end{thm}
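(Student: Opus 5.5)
The plan is to exploit the fact that the triangles of a $2K_3$-free graph $G$ form a pairwise intersecting family of $3$-sets, and to split into two cases according to whether this family has a common vertex (a \emph{star}) or not.

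\emph{Case 1: some vertex $v$ lies in every triangle.} Then $G-v$ contains no triangle. Every triangle of $G$ has the form $vxy$ with $xy$ an edge of $G[N_G(v)]$. So the number of triangles equals $e(G[N_G(v)])$. Since $G[N_G(v)]$ is triangle-free on at most $n-1$ vertices, Turán's theorem (the $r=2$ case of the first theorem above) gives at most $\lfloor (n-1)^2/4\rfloor$ triangles. Equality forces $N_G(v)=V(G)\setminus\{v\}$ and $G[N_G(v)]\cong T_2(n-1)$, that is, $G\cong K_1\vee T_2(n-1)$. Any edges added inside $G-v$ beyond this would create a triangle avoiding $v$, so no other graph attains the bound.

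\emph{Case 2: no vertex lies in all triangles.} Fix a triangle $T=abc$. Every other triangle meets $T$, and for each of $a,b,c$ there is a triangle avoiding it. I would classify triangles by $|\Delta\cap T|\in\{1,2\}$.
\begin{itemize}
\item Triangles meeting $T$ in exactly two vertices contain one of the edges $ab,bc,ca$. There are at most $3(n-3)$ of them.
\item The key claim is that triangles meeting $T$ in exactly one vertex are severely restricted. Suppose $\Delta_1=a x y$ is such a triangle. A triangle avoiding $a$ must meet both $T$ and $\Delta_1$, so it contains one of $b,c$ and one of $x,y$. This forces many edges among $\{a,b,c,x,y\}$.
\end{itemize}
Iterating this argument, I would show that either all triangles meet $T$ in at least two vertices, or all triangles lie inside a vertex set $S$ with $|S|\le 5$. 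In the latter case $G[S]\subseteq K_5$ gives at most $\binom{5}{3}=10<3n-8$ triangles, since $n\ge 7$. In the former case the count is at most $1+3(n-3)=3n-8$. Equality requires every vertex outside $T$ to be adjacent to all of $a,b,c$. It also requires $V(G)\setminus T$ to be independent, since an edge $uw$ outside $T$ would give disjoint triangles $a u w$ and $b c u'$ for some other vertex $u'$ when $n\ge 6$. Hence $G\cong K_3\vee (n-3)K_1$.

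Combining the two cases gives $ex(n,K_3,2K_3)=\max\{3n-8,\lfloor (n-1)^2/4\rfloor\}$ with the stated extremal graphs. The main obstacle is the structural claim in Case 2, that a non-star intersecting triangle family in a graph either consists of triangles sharing an edge with a fixed triangle or is confined to at most five vertices. To prove it I would first try choosing $T$ to maximize the number of triangles sharing an edge with it. Then, assuming a triangle meeting $T$ in a single vertex exists, I would do a small case analysis on how the triangles avoiding each of $a,b,c$ intersect it. The goal is to show that any triangle containing a vertex outside these five would be disjoint from one of the triangles already found.
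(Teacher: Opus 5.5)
Your Case 1 and the counting in the ``former case'' are fine. The gap is the structural claim that Case 2 rests on: it is false, and no choice of $T$ rescues it.

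Take a $K_4$ on $\{x,a,b,c\}$. Add vertices $y,z,w$ with $N(y)=\{x,a\}$, $N(z)=\{x,b\}$, $N(w)=\{x,c\}$, plus $n-7$ isolated vertices. The triangles are the four triangles of the $K_4$ together with $xay$, $xbz$, $xcw$.
\begin{itemize}
\item Every triangle other than $abc$ contains $x$, and $abc$ meets $xay,xbz,xcw$ in $a,b,c$ respectively. So the graph is $2K_3$-free.
\item No vertex lies in all triangles: $abc$ avoids $x$, $xbz$ avoids $a$ and $c$, and $xay$ avoids $b$.
\item The triangles span all seven vertices, so they are not confined to a $5$-set.
\item Every triangle is met by another one in a single vertex: $abc$ by $xay$, $xab$ by $xcw$, $xac$ by $xbz$, each of $xbc,xbz,xcw$ by $xay$, and $xay$ by $xbz$.
\end{itemize}
So neither alternative of your dichotomy holds, for any $T$. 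The theorem itself is not threatened: this graph has only $7$ triangles, and in general this Hilton--Milner-type configuration has at most $4+(n-4)=n<3n-8$. But your argument does not cover it. The flaw is visible in your sketch. The triangle avoiding $a$ only has to meet $\Delta_1=axy$ in one vertex, so its third vertex can be new, and nothing confines the picture to $\{a,b,c,x,y\}$.

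The natural repair is to split by clique number. The paper only cites this counting statement, but its proof of the spectral analogue, Theorem~\ref{cor1}, is organized exactly this way. The subcase of its Case 2 in which no outside vertex sees three vertices of the $K_4$ is precisely the configuration above.
\begin{itemize}
\item If $G$ contains a $K_5$ on $S$, a triangle with at most two vertices in $S$misses a triangle inside $S$. Hence all triangles lie in $S$, and there are at most $10$.
\item If $G$ contains a $K_4$ on $L$ but no $K_5$, every triangle meets $L$ in at least two vertices, for the same reason. So each vertex outside $L$ lies in at most $\binom{3}{2}=3$ triangles, giving at most $4+3(n-4)=3n-8$. In the equality case all outside vertices must see the same triple of $L$, since two different triples yield disjoint triangles. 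Together with your independence argument, this gives $K_3\vee(n-3)K_1$.
\item If $G$ is $K_4$-free and no vertex lies in all triangles, your dichotomy is actually true. If every two triangles shared two vertices, $K_4$-freeness would force a common edge, so there are triangles $pab,pcd$ meeting only in $p$. A triangle avoiding $p$ contains, say, $a$ and $c$. Its third vertex $e$ is new, since otherwise $\{p,a,b,c\}$ or $\{p,a,c,d\}$ spans a $K_4$. Hence $pac$ is a triangle. A triangle meeting $pac$ only in $p$, $a$ or $c$ would have to contain $e$, $d$ or $b$ respectively, again creating a $K_4$. So every triangle meets $pac$ in two vertices, and $K_4$-freeness bounds the total by $1+(n-3)=n-2$.
\end{itemize}
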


Let $ex(n,\{K_k, K_{k+1},\allowbreak\dots,\allowbreak,K_{2r-1}\},2K_r)$ be the maximum value of the sum from the number of $k$-cliques to the number of $(2r-1)$-cliques  among all $2K_r$-free graphs on $n$ vertices. Let \( C_{r}(G) \) be the set of all  $r$-cliques in  $G$. In 2024, Gerbner and  Patk\'{o}s \cite{Gerbner} determined the graphs with  the maximum  number of $k$-cliques among all $2K_r$-free graphs.

\begin{thm}\label{Gerbner}\emph{\cite{Gerbner}}
Let $r\geq2$ and $n$ large enough. \\
$(i)$ If $k<r$, then 
\[ex(n, K_k,2K_r) =|C_k(K_1 \vee T_{r-1}(n-1))|.\]
$(ii)$ If $r \le k \le2r-1$, then 
\[ex(n,\{K_k, K_{k+1},...,K_{2r-1}\},2K_r) = |C_k(K_{2k-2r+1} \vee T_{2r-k-1}(n-2k+2r-1))|.\]
\end{thm}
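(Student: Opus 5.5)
We would prove Theorem~\ref{Gerbner} by the standard stability and cleaning method, treating part $(ii)$ as the main case; part $(i)$ follows from the same argument with $k<r$, where the extremal graph is $K_1\vee T_{r-1}(n-1)$.

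\textbf{Structure of $2K_r$-free graphs.} Let $G$ be $2K_r$-free on $n$ vertices. Since any two $r$-cliques intersect, the family $C_r(G)$ is intersecting, so one of the following holds:
\begin{itemize}
\item[(a)] $G$ is $K_r$-free;
\item[(b)] some vertex set $S$ meets every $r$-clique, with $|S|\le$ a constant depending only on $r$.
\end{itemize}
For (b), take a maximal collection of $r$-cliques whose pairwise intersections are small; a sunflower or covering argument bounds the transversal size.

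\textbf{Pruning to a clean core.} If $G$ has many $k$-cliques with $k\ge r$, then the bounded set $S$ must be large in clique terms, i.e.\ many cliques use many vertices of $S$. We then pass to the set $A\subseteq S$ of vertices whose degree is linear in $n$.

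\textbf{Key claim.} $G-A$ is $K_{2r-|A|}$-free, and more precisely the link structure forces $G-A$ to be $K_{r-(|A|-(r-1))}$-free. The reason: if $A$ induced a clique $K_a$ and $G-A$ contained a $K_{2r-a}$, then splitting appropriately, using the high common degrees of $A$, would yield two disjoint $K_r$'s. Concretely, a clique in $G-A$ of size $2r-1-a+1$ together with $a$ vertices of $A$ can be partitioned into two $r$-cliques.

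Hence a $k$-clique uses at most $a$ vertices from $A$ and at most $2r-1-a$ vertices from $G-A$. This gives the bound
\[
\sum_{j\ge k}|C_j(G)|\;\le\;\sum_{j\ge k}\;\sum_{i} \binom{a}{i}\,\bigl|C_{j-i}\bigl(T_{2r-1-a}(n-a)\bigr)\bigr|.
\]
Here we use the Zykov--Erd\H{o}s result that the Turán graph maximizes clique counts among $K_{t+1}$-free graphs. Finally we optimize over $a$: the dominant term is of order $n^{\min(k,\,2r-1-a)}$, and making it equal to $n^{2r-1-a}$ with $2r-1-a\ge \ldots$ is optimal at $a=2k-2r+1$, i.e.\ $2r-1-a=2r-k-1+\ldots$. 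Comparing the leading coefficients for large $n$ singles out $K_{2k-2r+1}\vee T_{2r-k-1}(n-2k+2r-1)$. Summing over $j$ from $k$ to $2r-1$ is what makes this graph, rather than a neighbouring choice of $a$, the maximizer.

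\textbf{Main obstacle.} The hardest step is the cleaning: showing that vertices of $S$ with sublinear degree can be deleted or redistributed without loss, and establishing exactness rather than only asymptotics. We would handle this by a symmetrization argument, replacing a low-degree vertex by a copy of a vertex in the largest Turán part. We would check that this keeps the graph $2K_r$-free and strictly increases the clique sum, iterating until $G$ equals the claimed join graph.
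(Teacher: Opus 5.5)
There is a genuine gap, and it is in your key claim. Knowing only that $G-A$ is $K_{2r-a}$-free (so that a $k$-clique has at most $2r-1-a$ vertices outside $A$) is far too weak. At the value $a=2k-2r+1$ that you single out, $2r-1-a=2(2r-k-1)$, not $2r-k-1$. So already the $j=k$ term of your displayed bound has order $n^{\min(k,\,4r-2k-2)}$, which is strictly larger than the true order $n^{2r-k-1}$ for $r\le k\le 2r-2$. For $r=k=3$, $a=1$, that term is the number of triangles of $K_1\vee T_4(n-1)$, which is $\Theta(n^3)$, whereas the answer is $\lfloor (n-1)^2/4\rfloor$. The obstruction you overlook is not one clique in $G-A$. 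It is two disjoint cliques of $G-A$ whose orders add up to $2r-a$, each completed to a $K_r$ by its own part of $A$. This is what limits the Tur\'an part to $2r-k-1$ classes, and your argument never uses it. There are three further problems. Even your single-clique splitting needs that clique to be completely joined to $A$, and high degrees of the vertices of $A$ do not give this. Nothing pins $A$ down to the kernel: a minimum transversal of the $r$-cliques of the extremal graph has only $k-r+1<2k-2r+1$ vertices when $k>r$, and for $2r-k-1\ge 2$ the Tur\'an-part vertices also have linear degree. Finally, cloning a vertex can create a $2K_r$, since two $r$-cliques meeting only in that vertex become disjoint, so the symmetrisation step is unsafe before the structure is known.

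The missing idea is that all $k$-cliques contain a \emph{common kernel} of size $2k-2r+1$. The paper does not prove Theorem~\ref{Gerbner}; it quotes it from Gerbner--Patk\'os. However, Claim~\ref{claim1} in the proof of Theorem~\ref{sun2} is exactly the combinatorial core of $(ii)$.
\begin{itemize}
\item Since $k\ge r$, any two $k$-cliques share at least $2k-2r+1$ vertices.
\item Suppose some $r+1$ $k$-cliques form a sunflower with kernel $\mathcal{K}$, $|\mathcal{K}|=2k-2r+1$. Then every $k$-clique contains $\mathcal{K}$. Indeed, a $k$-clique with only $l\le 2k-2r$ vertices in $\mathcal{K}$ can meet at most $\lfloor (k-l)/(2k-2r+1-l)\rfloor\le 2r-k\le r$ petals in at least $2k-2r+1-l$ vertices each.
\item Suppose there is no such sunflower, and fix a $k$-clique $K$. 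Every $(k+j)$-clique consists of a $(2k-2r+1+j)$-subset $S$ of $K$ plus a $(2r-k-1)$-clique in $N(S)$. The graph $G[N(S)]$ is $(r+1)K_{2r-k-1}$-free, since otherwise a $(2k-2r+1)$-subset of $S$ together with those cliques would form such a sunflower. Hence it spans $O(n^{2r-k-2})$ such cliques (Lemma~\ref{lem6}, or trivially when $2r-k-1\le 2$). So the whole sum is $O(n^{2r-k-2})$, below the construction.
\item Once every $k$-clique contains $\mathcal{K}$, $G$ is $K_{k+1}$-free, because the $k$-subsets of a $K_{k+1}$ have empty intersection. The sum therefore collapses to $|C_k(G)|$. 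Each $k$-clique is $\mathcal{K}$ plus a $(2r-k-1)$-clique of the $K_{2r-k}$-free graph $G[N(\mathcal{K})]$ on at most $n-2k+2r-1$ vertices. The Zykov--Erd\H{o}s theorem then gives the exact value, with no cleaning at all.
\end{itemize}
Finally, part $(i)$ does not follow from the same argument. For $k<r$, two $k$-cliques may be disjoint and need not lie in any $r$-clique. A separate stability argument is needed, for example one starting from the fact that $G-Q$ is $K_r$-free for every $r$-clique $Q$, and your proposal supplies none.
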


In 2007, Nikiforov \cite{nikiforov2007} determined the graphs on $n$ vertices with maximum spectral radius among all \( K_{r+1} \)-free graphs. In 2023, Ni, Wang and Kang \cite{Ni2023} determined the graphs on $n$ vertices with maximum spectral radius among all $kK_{r+1}$-free graph for $k\ge2$, $r\ge2$.

In 2023, Liu and Bu \cite{liu2023mantel} defined the \( r \)-clique tensor of a graph.
\begin{defi}\label{wei1}
\emph{\cite{liu2023mantel}}  For a graph $G$ with $n$ vertices and an integer $r\ge 2$, the $r$-clique tensor $\mathcal{A}_r(G)=\left( {{a}_{{{i}_{1}}{{i}_{2}}\ldots {{i}_{r}}}} \right)$ is an $r$-order $n$-dimensional tensor, with entries
\[
a_{i_1 i_2 \ldots i_r} = 
\begin{cases}
   \dfrac{1}{(r-1)!}, & \{ i_1, i_2, \dots, i_r \} \in C_r(G), \\[8pt]
   0, & \text{otherwise}.
\end{cases}
\]
The $r$-clique spectral radius of $G$ is the spectral radius of $\mathcal{A}_r(G)$, denoted by $\mu_r(G)$. If $r=2$, $\mu_r(G)$ is the spectral radius of $G$.
\end{defi}

 Liu and Bu \cite{liu2023mantel}  generalized the spectral Mantel Theorem \cite{nosal1970}, determined the maximum \( r \)-clique spectral radius of  a \( K_{r+1} \)-free graph. In 2025, Yu and Peng \cite{Peng2025} gave a clique spectral version of the result by Zykov \cite{Zykov1949} and  Erd\H{o}s \cite{Erdos1962}, determined the graphs with maximum \( s \)-clique spectral radius of  a \( K_{r+1} \)-free graph for $2\le s\le r$. In 2026, Liu and Bu \cite{Li2026} obtained a lower bound of the number of $r$-cliques in $G$ via the  $r$-clique spectral radius.
\begin{thm}\label{Liu}\emph{\cite{Li2026}}
Let $G$ be a graph with the size of the largest clique \( \omega \). Then 

 \[\mu_r(G)\le \frac{r}{\omega}\tbinom{\omega}{r}^{\frac{1}{r}}|C_r(G)|^{\frac{r-1}{r}},\]
where $\omega\ge r\ge 2$. Moreover, if $G$ is a complete regular $\omega$-partite graph,  then the equality is achieved in the above inequality.
\end{thm}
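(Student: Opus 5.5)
We view $\mu_r(G)$ as the maximum of a homogeneous polynomial over the unit $\ell_r$-sphere. For the nonnegative symmetric tensor $\mathcal{A}_r(G)$ one has
\[
\mu_r(G)=\max_{\|x\|_r=1,\,x\ge 0} x^{T}\mathcal{A}_r(G)x^{r-1}
= \max_{\|x\|_r=1,\,x\ge 0}\ r\sum_{\{i_1,\dots,i_r\}\in C_r(G)} x_{i_1}\cdots x_{i_r},
\]
since each $r$-clique contributes $r!$ ordered terms, each with weight $1/(r-1)!$. Fix a nonnegative maximizer $x$ with $\sum_i x_i^r=1$, and write $\omega$ for the clique number of $G$.

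The core of the argument is to bound the clique polynomial $P(x)=\sum_{Q\in C_r(G)}\prod_{i\in Q}x_i$ by an expression in $|C_r(G)|$ and $\omega$. The plan is to use a Motzkin--Straus type result for cliques, namely Khadzhiivanov--Nikiforov/Sós--Straus. Put $y_i=x_i^{r}$, so that $\sum_i y_i=1$. By AM--GM applied to each clique,
\[
\prod_{i\in Q}x_i=\Bigl(\prod_{i\in Q}y_i\Bigr)^{1/r}.
\]
Hölder's inequality over the $|C_r(G)|$ cliques then gives
\[
P(x)\le |C_r(G)|^{\frac{r-1}{r}}\Bigl(\sum_{Q\in C_r(G)}\prod_{i\in Q}y_i\Bigr)^{\frac1r}.
\]
The inner sum is the $r$-clique Lagrangian of $G$ evaluated at a point $y$ of the simplex. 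Its maximum over the simplex is at most $\binom{\omega}{r}\omega^{-r}$. This follows from the Sós--Straus generalization of Motzkin--Straus: the maximum is attained on a clique, and on $K_\omega$ the uniform vector gives $\binom{\omega}{r}/\omega^r$. Combining these,
\[
\mu_r(G)\le r\,|C_r(G)|^{\frac{r-1}{r}}\binom{\omega}{r}^{\frac1r}\frac1\omega,
\]
which is the claimed bound.

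I expect the Lagrangian step to be the main obstacle. If the Sós--Straus result is not taken as known, I would prove it by a Zykov symmetrization argument. Suppose two nonadjacent vertices $u,v$ both carry positive weight. The Lagrangian is linear in $(y_u,y_v)$ along the segment $y_u+y_v=\text{const}$, because no clique contains both $u$ and $v$. We can therefore move all of the weight onto one of them without decreasing the value. Repeating this, the support of the maximizer becomes a clique of size $s\le\omega$. On $K_s$, Maclaurin's inequality shows that the maximum is $\binom{s}{r}s^{-r}$. This quantity is nondecreasing in $s$, since the ratio $\frac{(s+1-r)}{s+1}\cdot\bigl(\frac{s+1}{s}\bigr)^r\ge1$ by Bernoulli's inequality, so the maximum over $s\le\omega$ is attained at $s=\omega$.

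For the equality case, take $G$ to be the complete $\omega$-partite graph with parts of size $m$, so that $n=\omega m$. By symmetry and Perron--Frobenius for weakly irreducible tensors, the Perron vector is constant, $x_i=n^{-1/r}$. This gives $\mu_r(G)=r\,|C_r(G)|/n$ with $|C_r(G)|=\binom{\omega}{r}m^r$. Substituting these values into the right-hand side gives
\[
r\binom{\omega}{r}^{1/r}\omega^{-1}\Bigl(\binom{\omega}{r}m^r\Bigr)^{(r-1)/r}=r\binom{\omega}{r}m^{r-1}/\omega,
\]
which is the same quantity $r|C_r(G)|/n$.
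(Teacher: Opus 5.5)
The paper does not prove this statement. It is quoted from \cite{Li2026} and used only as a black box: in Cases 2 and 3.1 of the proof of Theorem \ref{cor1}, and to bound $\mu_3(K_3\vee(n-3)K_1)$. So there is no internal argument to compare against.

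Your argument is a correct and complete proof. Lemma \ref{lem1} gives $\mu_r(G)=r\max\sum_{Q\in C_r(G)}\prod_{i\in Q}x_i$. You then substitute $y_i=x_i^r$ and apply H\"older. The step you call AM--GM is just the identity $\prod_{i\in Q}x_i=(\prod_{i\in Q}y_i)^{1/r}$. Finally, the Khadzhiivanov/S\'os--Straus bound $\sum_{Q}\prod_{i\in Q}y_i\le\binom{\omega}{r}\omega^{-r}$ on the simplex yields exactly the stated inequality. Your symmetrization proof of that Lagrangian bound is sound: for $u\not\sim v$ the Lagrangian is affine along $y_u+y_v=\mathrm{const}$, and each move shrinks the support. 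For $r=2$ this is precisely Nikiforov's Motzkin--Straus proof of $\mu_2(G)^2\le 2(1-1/\omega)|E(G)|$, and your argument is its natural $r$-uniform analogue.

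Two small corrections.

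\emph{Monotonicity.} The quantity $\frac{s+1-r}{s+1}\bigl(\frac{s+1}{s}\bigr)^r$ is the ratio $\binom{s}{r}s^{-r}\big/\binom{s+1}{r}(s+1)^{-r}$. Monotonicity therefore needs it to be at most $1$, not at least $1$. Bernoulli's inequality $\bigl(1-\frac{1}{s+1}\bigr)^r\ge 1-\frac{r}{s+1}$ gives exactly that. You can also skip this computation: pad the clique-supported vector with $\omega-s$ zeros and apply Maclaurin's inequality in $\omega$ nonnegative variables.

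\emph{Equality case.} You do not need weak irreducibility, Perron--Frobenius, or vertex-transitivity. The constant test vector in Lemma \ref{lem1} gives $\mu_r(G)\ge r|C_r(G)|/n$, which you showed equals the right-hand side. The reverse inequality is the bound you have just proved.
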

Other research on high order spectra extremal results of graphs, please refer to \cite{Wanga,1,peng2026,a}. 
 
 In this paper, we establish the spectral versions of Theorem \ref{Gerbner} $(\romannumeral 2)$ and Theorem \ref{yang1}. Let $\mathrm{e}$ be the base of the natural logarithm.
\begin{thm}\label{sun2}
Let $G$ be a $2K_r$-free graph on $n$ vertices, where $r\ge3$. If $r\le k\le 2r-1$ and $n\ge (2r-k-1)\left(\mathrm{e}r^4k^{\frac{k}{2}}\right)^{\frac{k}{2k-2k+1}}+k $, then we have
\[\mu_k(G)+\mu_{k+1}(G)+\cdots+\mu_{2r-1}(G) \le  \mu_k( K_{2k-2r+1}\vee T_{2r-k-1}(n-2k+2r-1)).\]
$(\romannumeral 1)$ For $k=2r-1$, equality holds if and only if $|C_k(G)|=1$. \\
$(\romannumeral 2)$ For $r\le k\le 2r-2$, equality holds if and only if $G\cong  K_{2k-2r+1}\vee T_{2r-k-1}(n-2k+2r-1).$
\end{thm}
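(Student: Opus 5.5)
The plan has two main tools: Theorem \ref{Liu}, which converts spectral information into clique counts, and the structure of $2K_r$-free graphs, which is fed back through the eigenvector equations of the clique tensors. Throughout write $H=K_{2k-2r+1}\vee T_{2r-k-1}(n-2k+2r-1)$. First I record that $\omega(H)=k$, so $\mu_j(H)=0$ for $j>k$ and the right-hand side is just $\mu_k(H)$. Next, every $2K_r$-free $G$ satisfies $\omega(G)\le 2r-1$, so only the terms $\mu_j$ with $k\le j\le 2r-1$ can be nonzero.

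For case (i), $k=2r-1$, I would first check that $G$ has at most one $(2r-1)$-clique. Take two distinct $(2r-1)$-cliques sharing $s\le 2r-2$ vertices. Each has $2r-1-s\ge 1$ private vertices, and one can pick an $r$-subset of the first clique and an $r$-subset of the second that are disjoint. This works by distributing the $s$ common vertices between the two cliques, since $2r\le 2(2r-1)-s+s$ holds as soon as the union has at least $2r$ vertices. The result contradicts $2K_r$-freeness. Hence $\mu_{2r-1}(G)\in\{0,\mu_{2r-1}(K_{2r-1})\}$, and $H=K_{2r-1}\vee\overline{K}_{n-2r+1}$ attains the maximum. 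Equality therefore holds exactly when $|C_{2r-1}(G)|=1$.

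For case (ii), $r\le k\le 2r-2$, I would take an extremal $G$ and a positive Perron vector of $\mathcal A_k(G)$, arguing in three steps.
\begin{itemize}
\item \emph{Lower bound.} Test $\mu_k(H)$ with a weighted vector (weight $a$ on the $K_{2k-2r+1}$ part, $b$ on the rest) to get $\mu_k(H)\ge c\,n^{(2r-k-1)(k-1)/k}$ with an explicit constant.
\item \emph{Higher terms are negligible.} By Theorem \ref{Liu} with $\omega\le 2r-1$, combined with the counting bound $|C_j(G)|=O(n^{2r-1-j})$ (from Theorem \ref{Gerbner}(ii) applied at level $j$), each $\mu_j(G)$ with $j>k$ is of strictly lower order. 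The explicit threshold $n\ge(2r-k-1)(\mathrm{e}r^4k^{k/2})^{\cdots}+k$ is exactly what makes this comparison quantitative.
\item \emph{Stability.} Applying Theorem \ref{Liu} to $\mu_k(G)$ then forces $|C_k(G)|\ge(1-o(1))|C_k(H)|$. By the stability behind Theorem \ref{Gerbner}, there is a set $S$ of $2k-2r+1$ vertices lying in almost all $k$-cliques, with $G-S$ being $K_{2r-k}$-free. The reason is that the $r$-cliques must pairwise intersect, which forces a common small core.
\end{itemize}

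With this structure in hand, I would use the eigenvector equations $\mu_k x_i^{k-1}=\sum_{\{i,i_2,\dots,i_k\}\in C_k}x_{i_2}\cdots x_{i_k}$ to show three things: the entries on $S$ are maximal, $S$ is a clique completely joined to $V\setminus S$, and there are no $j$-cliques with $j>k$, since any such clique would create $2K_r$ together with the abundant $k$-cliques through $S$. This reduces $\mu_k(G)$ to the $(2r-k-1)$-clique spectral problem on the $K_{2r-k}$-free graph $G-S$ weighted by $\prod_{s\in S}x_s$. The Yu–Peng clique-spectral Zykov–Erdős result then gives that $G-S$ is the balanced Turán graph $T_{2r-k-1}(n-2k+2r-1)$.

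I expect the main obstacle to be the stability step with explicit constants. One has to pass from ``$\mu_k(G)\ge\mu_k(H)$'' to a rigid core $S$ of the right size while keeping the threshold in $n$ explicit. I would first try to locate vertices of large Perron weight and show that each of them lies in all $r$-cliques, using $2K_r$-freeness. Removing such a vertex drops the clique order by one, which gives an induction on $2r-1-k$.
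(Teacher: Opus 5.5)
Your outline has a genuine gap at exactly the step you flag as the main obstacle, and as written it would not go through.

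\textbf{Why the stability step fails.} First, Theorem \ref{Liu} cannot give $|C_k(G)|\ge(1-o(1))|C_k(H)|$. When $\omega(G)>k$, the constant $\frac{k}{\omega}\binom{\omega}{k}^{1/k}$ is strictly larger than $1$, because $\binom{\omega}{k}>(\omega/k)^k$. So from $\mu_k(G)\ge(1-o(1))\mu_k(H)=(1-o(1))|C_k(H)|^{(k-1)/k}$ you only recover a constant fraction of $|C_k(H)|$. The sharp version would need $G$ to be $K_{k+1}$-free, which is a consequence of the structure you are trying to derive.

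Second, Theorem \ref{Gerbner} has no stability part. The sentence ``the $r$-cliques pairwise intersect, which forces a common core'' is false in general: the $r$-cliques of $K_{2r-1}$ pairwise intersect but have no common vertex. So this is precisely the content that must be proved. Moreover, for the equality case, ``almost all $k$-cliques contain $S$'' must be upgraded to ``all''.

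Third, your fallback idea fails. For $k>r$ the core vertices of $H$ do not lie in all $r$-cliques. For $r=3$, $k=4$ we have $H=K_3\vee\overline{K}_{n-3}$, and the core vertex $s_1$ misses the triangle $\{s_2,s_3,v\}$.

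Finally, you route the counting through Theorem \ref{Gerbner}, which holds only for $n$ sufficiently large. You therefore would not recover the explicit threshold in the statement.

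\textbf{The missing idea: the paper's sunflower dichotomy.} Delete the edges of the extremal $G$ that lie in no $k$-clique; this keeps every $\mu_i$ with $i\ge k$. In the resulting $G'$, any two $k$-cliques share at least $2k-2r+1$ vertices. Now split into two cases.

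\emph{Case A: $G'$ contains $r+1$ $k$-cliques pairwise meeting exactly in a kernel of size $2k-2r+1$.} Then every $k$-clique contains that kernel. Indeed, a $k$-clique meeting the kernel in $l<2k-2r+1$ vertices must meet each of the disjoint petals in at least $2k-2r+1-l$ vertices. This is possible for at most $\lfloor (k-l)/(2k-2r+1-l)\rfloor\le 2r-k\le r$ petals, but there are $r+1$ petals.

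\emph{Case B: no such configuration exists.} Fix a $k$-clique $K$; every $(k+j)$-clique meets $K$ in at least $2k-2r+1+j$ vertices. For each such subset of $K$, the common neighborhood is $(r+1)K_{2r-k-1}$-free. The explicit bound of Lemma \ref{lem6} then gives $\sum_{i\ge k}|C_i(G')|=O(n^{2r-k-2})$ with an explicit constant. Combined with $\mu_i\le\max_v d_{G',i}(v)\le|C_i(G')|$, the total lies below $\mu_k(H)>\big(\frac{n-k}{2r-k-1}\big)^{(k-1)(2r-k-1)/k}$. This comparison is where the explicit threshold comes from.

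An exact common core makes $G'$ $K_{k+1}$-free at once, so the higher terms vanish identically rather than asymptotically.

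\textbf{Remaining steps that need adjusting.}
\begin{itemize}
\item A positive Perron vector requires weak irreducibility, i.e.\ connectivity of $G'$. The paper proves this by attaching isolated vertices to the core.
\item After fixing $x_S$, your reduced problem is a degree-$(2r-k-1)$ clique form under an $\ell_k$-constraint. That is not the $(2r-k-1)$-clique spectral radius, so the Yu--Peng theorem does not apply verbatim. The paper instead uses Zykov symmetrization with the Perron vector to get a complete $k$-partite graph, then Lemma \ref{r-bu} for balance.
\item In case (i), read $H$ as $K_{2r-1}$ plus isolated vertices. The graph $K_{2r-1}\vee\overline{K}_{n-2r+1}$ contains $K_{2r}\supseteq 2K_r$. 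Your argument that $|C_{2r-1}(G)|\le 1$ is fine.
\end{itemize}
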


Since $K_{2k-2r+1}\vee T_{2r-k-1}(n-2k+2r-1)$ is a complete $k$-partite graph, we have $K_{2k-2r+1}\vee T_{2r-k-1}(n-2k+2r-1)$ is $K_{k+1}$-free, then  we have $\mu_i(G)=0$ for $i=k+1,\cdots,2r-1$. Thus, by Theorem \ref{sun2}, we obtain the following corollary.

\begin{cor}\label{sun2s}
    Let  $G$ be a $2K_r$-free graph on $n$ vertices, where $r\ge3$. If $r\le k\le 2r-1$ and $n\ge (2r-k-1)\left(\mathrm{e}r^4k^{\frac{k}{2}}\right)^{\frac{k}{2k-2k+1}}+k $, then we have
\[\mu_k(G) \le  \mu_k( K_{2k-2r+1}\vee T_{2r-k-1}(n-2k+2r-1)).\]
$(\romannumeral 1)$ For $k=2r-1$, equality holds if and only if $|C_k(G)|=1$. .\\
$(\romannumeral 2)$ For $r\le k\le 2r-2$, equality holds if and only if $G\cong  K_{2k-2r+1}\vee T_{2r-k-1}(n-2k+2r-1)$. 
\end{cor}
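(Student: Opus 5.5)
The corollary follows from Theorem \ref{sun2}, applied with the same $r$, $k$ and $n$, together with the fact that higher clique spectral radii are nonnegative.

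\emph{Nonnegativity.} For any graph $G$ and any $i\ge 2$, the tensor $\mathcal{A}_i(G)$ is nonnegative. By the Perron--Frobenius theory for nonnegative tensors, $\mu_i(G)\ge 0$. Moreover, $\mu_i(G)=0$ exactly when $\mathcal{A}_i(G)$ is the zero tensor, that is, when $C_i(G)=\emptyset$. Indeed, if $G$ contains an $i$-clique $S$, then taking $x$ to be the normalized indicator vector of $S$ gives $x^{T}(\mathcal{A}_i(G)x)>0$. This forces $\mu_i(G)>0$, because for a symmetric nonnegative tensor the spectral radius equals the maximum of $\mathcal{A}x^{i}$ over the unit $\ell_i$-sphere.

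\emph{The inequality.} Let $G$ be $2K_r$-free on $n$ vertices, with $r\le k\le 2r-1$ and $n$ above the stated bound. Then
\[\mu_k(G)\le \mu_k(G)+\mu_{k+1}(G)+\cdots+\mu_{2r-1}(G)\le \mu_k( K_{2k-2r+1}\vee T_{2r-k-1}(n-2k+2r-1)).\]
The first step drops nonnegative terms, and the second is Theorem \ref{sun2}.

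\emph{Equality.} Suppose $\mu_k(G)$ attains the bound. Then both inequalities above are equalities. In particular, equality holds in Theorem \ref{sun2}, and $\mu_i(G)=0$ for $k<i\le 2r-1$.

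For $k=2r-1$ the sum consists of the single term $\mu_k(G)$, so the two statements coincide and (i) of Theorem \ref{sun2} transfers verbatim: equality holds iff $|C_k(G)|=1$.

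For $r\le k\le 2r-2$, equality in Theorem \ref{sun2} forces $G\cong K_{2k-2r+1}\vee T_{2r-k-1}(n-2k+2r-1)$ by part (ii). Conversely, suppose $G$ is this graph. It is complete $k$-partite, since the clique contributes $2k-2r+1$ singleton parts and the Turán graph contributes $2r-k-1$ parts, for a total of $k$. Hence $G$ is $K_{k+1}$-free, so $\mu_i(G)=0$ for all $i>k$, and the sum reduces to $\mu_k(G)$, giving equality. One should also check that this graph is $2K_r$-free. Two disjoint $r$-cliques would need $2r$ vertices in distinct parts pairwise within each clique. Each clique uses at least $r-(2r-k-1)=k-r+1$ singleton parts, so the two cliques together use at least $2k-2r+2>2k-2r+1$ singleton parts, which is impossible. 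This check is not strictly needed for the corollary, since Theorem \ref{sun2} already asserts equality for this graph.

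\emph{Main obstacle.} There is essentially none: the corollary is a direct consequence of Theorem \ref{sun2}. The only point that needs care is the nonnegativity and vanishing of $\mu_i$, which comes from standard Perron--Frobenius facts for nonnegative tensors.
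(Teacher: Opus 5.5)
Your proposal is correct and follows the paper's route: the paper also deduces the corollary directly from Theorem \ref{sun2}, using that $K_{2k-2r+1}\vee T_{2r-k-1}(n-2k+2r-1)$ is complete $k$-partite and hence $K_{k+1}$-free. You additionally spell out the step the paper leaves implicit, namely $\mu_k(G)\le\sum_{i=k}^{2r-1}\mu_i(G)$, which needs only $\mu_i(G)\ge 0$; this is immediate from defining the spectral radius as a maximum modulus, so no Perron--Frobenius theory is required.
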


In this paper, we determine the graphs with maximum 3-clique radius spectral among all $2K_3$-free graphs on $n$ vertices.

\begin{thm}\label{cor1}

  Let  $G$ be a $2K_3$-free graph on $n$ vertices.  Then
\[
\mu_{3}(G) \le 
\begin{cases}
\mu_{3}(K_{n}), & 3 \le n \le 5, \\
\mu_{3}(K_{5} \cup K_1), & n = 6, \\
\mu_{3}(K_{3} \vee (n-3)K_1), & 7 \le n \le 13, \\
\mu_{3}(K_{1} \vee T_2(n-1)), & n \ge 14.
\end{cases}
\]
$(\romannumeral 1)$ For $3 \le n \le 5$, equality holds if and only if  $G\cong K_{n}$. \\
$(\romannumeral 2)$ For $n=6$, equality holds if and only if  $G\cong K_{5}\cup K_1$ or $G\cong G_0$, where $G_0$ is obtained by adding an edge between a vertex in $K_5$ and an isolated vertex.\\
$(\romannumeral 3)$ For $7\le n\le 13$, equality holds if and only if $G\cong K_{3}\vee (n-3)K_1$. \\
$(\romannumeral 4)$ For $n\ge 14$, equality holds if and only if $G\cong K_{1}\vee T_{2}(n-1)$.
\end{thm}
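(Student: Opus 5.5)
We will prove Theorem \ref{cor1} by combining Theorem \ref{Liu} with the clique counts of Theorem \ref{yang1}, and by comparing the explicit $3$-clique spectral radii of the candidate extremal graphs.

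\textbf{Reduction to clique counts.} For a $2K_3$-free graph $G$ we have $\omega(G)\le 5$, since $K_6$ contains $2K_3$. We may assume $G$ contains a triangle, as otherwise $\mu_3(G)=0$. We split according to $\omega\in\{3,4,5\}$ and apply Theorem \ref{Liu} with $r=3$:
\[\mu_3(G)\le \frac{3}{\omega}\tbinom{\omega}{3}^{1/3}|C_3(G)|^{2/3}.\]
For $\omega=3$ the constant is $1$. For $\omega=4$ it is $\tfrac34\cdot 4^{1/3}$, and for $\omega=5$ it is $\tfrac35\cdot 10^{1/3}$. Both of the latter exceed $1$, so we need finer information in those cases.

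\textbf{Dense cases ($\omega\ge 4$).} If $\omega=5$, take a $K_5$ on a set $S$. Any triangle not inside $S$ would meet $S$ in at most two vertices, leaving at least three vertices of $S$ that form a disjoint triangle; this contradicts $2K_3$-freeness. So every triangle lies in $S$, giving $|C_3(G)|=10$ and $\mu_3(G)=\mu_3(K_5)$. This value is attained by $K_5\cup (n-5)K_1$ and by the graph $G_0$ (the extra edge creates no triangle).

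If $\omega=4$, a similar case analysis bounds the triangle structure. Either every triangle meets a fixed vertex or edge, or the triangles are confined to a small vertex set. In each subcase we bound $\mu_3$ by a small explicit value, such as $\mu_3(K_4)$, or by that of $K_2\vee(\text{stuff})$ or $K_3\vee(n-3)K_1$.

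\textbf{Sparse case ($\omega=3$).} Here $\mu_3(G)\le |C_3(G)|^{2/3}$. By Theorem \ref{yang1}, for $n\ge 7$ this gives
\[\mu_3(G)\le \max\{3n-8,\lfloor (n-1)^2/4\rfloor\}^{2/3}.\]
We then compare this with the explicit values. For $K_1\vee T_2(n-1)$ we use its symmetry to compute $\mu_3$ from the eigenequations with three variable classes: the apex and the two parts. For $K_3\vee(n-3)K_1$ we likewise use two variable classes. These values are not of the form count$^{2/3}$ exactly, so the crude bound will not suffice near the threshold.

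\textbf{Main obstacle.} The hardest step is the regime near $n=14$, where the two candidates' $\mu_3$ values cross. There the bound from Theorem \ref{Liu} is not sharp, because neither candidate is a regular complete $3$-partite graph.

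For that regime we plan to use structural information instead. A $2K_3$-free graph with a triangle either has a vertex meeting all triangles, or all its triangles lie within a set of at most $5$ vertices; this is a standard lemma on triangle covers. In the first case, $\mu_3(G)\le \mu_3(K_1\vee H)$ with $H$ triangle-free on $n-1$ vertices. Using the eigenvector equations, $\mu_3(K_1\vee H)$ is governed by $\mu_2(H)$, which is maximized by $T_2(n-1)$ via Mantel/Nosal.

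In the second case, every triangle lies in a $\le 5$-vertex set whose triangles pairwise intersect. One enumerates the finitely many configurations: the $K_5$ case, a $K_3$ joined to independent vertices, and a $K_2$ joined to independent vertices, which is dominated by $K_3\vee(n-3)K_1$. We then compare the resulting explicit values by direct computation for $7\le n\le 13$ and $n\ge 14$. The small cases $n\le 6$ are handled by the same structure, together with monotonicity of $\mu_3$ under adding edges. Equality characterizations follow from the irreducibility (Perron–Frobenius) arguments for the clique tensor in each case.
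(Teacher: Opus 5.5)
Your $\omega=5$ case matches the paper, but there is a genuine gap in the $\omega=4$ case, which is exactly where the extremal graph $K_3\vee(n-3)K_1$ for $7\le n\le 13$ lives. Your $\omega=4$ paragraph contains no actual argument, and the ``standard lemma'' you fall back on is false. In $K_3\vee(n-3)K_1$ ($n\ge 6$), each core vertex misses the triangles formed by the other two core vertices and an outer vertex, and each outer vertex misses the core triangle. So no vertex lies in all triangles, yet the triangles cover all $n$ vertices. Your own enumeration lists this graph, which contradicts the ``at most $5$ vertices'' clause.

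Worse, take a $K_4$ on $\{1,2,3,4\}$ and add vertices each joined to $1$ and to one of $2,3,4$, with all three types present. This graph is $2K_3$-free with $\omega=4$, and no vertex lies in all its triangles, since $\{2,3,4\}$ avoids $1$. It is contained in none of $K_5$, $K_3\vee(n-3)K_1$, $K_2\vee(n-2)K_1$, and it appears nowhere in your case list.

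Counting cannot replace structure here. For $\omega=4$ the constant in Theorem \ref{Liu} is $\frac34 4^{1/3}>1$, and already at $n=7$ one gets $\frac34 4^{1/3}\cdot 13^{2/3}\approx 6.58>\mu_3(K_3\vee 4K_1)\approx 6.45$.

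The missing idea is the paper's Case 2. Delete the edges lying in no triangle to get $G'$, and let $L$ be a $K_4$.
\begin{itemize}
\item $G'-L$ is edgeless. An edge $ab$ there lies in a triangle whose third vertex $x$ must be in $L$, and then $abx$ and $L\setminus\{x\}$ are disjoint triangles.
\item Since $\omega=4$, no outer vertex sees all of $L$.
\item If some outer vertex sees three vertices $u,v,u_1$ of $L$, then no outer vertex is adjacent to the fourth vertex of $L$, or two disjoint triangles appear. Hence $G'\subseteq K_3\vee(n-3)K_1$, and Corollary \ref{55} gives both the bound and the equality case.
\item Otherwise each outer vertex lies in at most one triangle, so $|C_3(G')|\le n$. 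Theorem \ref{Liu} then gives $\mu_3(G')\le\frac34(2n)^{2/3}<\mu_3(K_3\vee(n-3)K_1)$.
\end{itemize}

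On the other hand, you underrate your $\omega=3$ argument; the difficulty does not lie there. For $\omega=3$ the constant in Theorem \ref{Liu} is $1$. Contrary to your remark, $K_1\vee T_2(n-1)$ is complete $3$-partite, so $\mu_3=|C_3|^{2/3}$ exactly by Lemma \ref{r-bu}.
\begin{itemize}
\item For $n\ge 14$ you get $\mu_3(G)\le\lfloor (n-1)^2/4\rfloor^{2/3}=\mu_3(K_1\vee T_2(n-1))$. Equality forces $|C_3(G)|=ex(n,K_3,2K_3)$, hence $G\cong K_1\vee T_2(n-1)$ by Theorem \ref{yang1}.
\item For $7\le n\le 13$ a direct check gives $\max\{3n-8,\lfloor (n-1)^2/4\rfloor\}^{2/3}<\mu_3(K_3\vee(n-3)K_1)$.
\end{itemize}
This is shorter than the paper's Case 3 (edge colouring, the $3n-12$ count, and symmetrization), at the price of relying on the uniqueness part of Theorem \ref{yang1}.

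Your claim that $\mu_3(K_1\vee H)$ is governed by $\mu_2(H)$ is inaccurate; the relevant quantity is the $3$-spectral radius $\max_{\|y\|_3=1}y^{\top}A(H)y$. You do not need it, though: Mantel applied to the link of the common vertex, together with Theorem \ref{Liu} for $\omega=3$, already suffices.
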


The remainder of this paper is organized as follows. In Section 2, some definitions and lemmas required for proofs are introduced. In Section 3, we give proofs of Theorems \ref{sun2} and  \ref{cor1}.

\section{Preliminaries}
\indent In this section, we introduce some definitions and lemmas required for the proofs.

\indent In 2005, Qi \cite{qi2005} and Lim \cite{lim2005} proposed tensor eigenvalues respectively. For an \( r \)-order \( n \)-dimensional complex tensor \( \mathcal{A} = \left( a_{i_1 i_2 \ldots i_r} \right) \) and an \( n \) 
-dimensional complex vector \( x = \left( x_1, x_2, \ldots, x_n \right)^{\top} \), \( \mathcal{A} x^{r-1} \) is an $n$-dimensional complex vector whose \( i \) component is  
\[(\mathcal{A}x^{r-1})_i=\sum_{i_2, \ldots, i_r = 1}^n a_{i i_2 \ldots i_r} x_{i_2} \cdots x_{i_r},\]  
If there exist a complex number \( \lambda \) and a nonzero complex vector \( x = \left( x_1, x_2, \ldots, x_n \right)^{\top} \) satisfying  
\[\mathcal{A} x^{r-1} = \lambda x^{[r-1]},\]  
then \( \lambda \) is called an eigenvalue of \( \mathcal{A} \), and \( x \) is the eigenvector of \( \mathcal{A} \) corresponding to \( \lambda \), where \( x^{[r-1]} = \left( x_1^{r-1}, \ldots, x_n^{r-1} \right)^{\top} \).  
The set of all eigenvalues of \( \mathcal{A} \) is denoted by \( \Lambda(\mathcal{A}) \), and the spectral radius is defined as \( \mu(\mathcal{A}) = \max \left\{ |\mu| : \mu \in \Lambda(\mathcal{A}) \right\} \).

We call a real tensor \emph{nonnegative} if each element  is nonnegative. For an \( r \)-order \( n \)-dimensional tensor \( \mathcal{A} = \left( a_{i_1 i_2 \ldots i_r} \right) \). For any permutation $\sigma$ of $i_1 i_2 \ldots i_r$, if $a_{i_1 i_2 \ldots i_r}=a_{\sigma (i_1 i_2 \ldots i_r)}$ for $i_j\in[n]$ and $j=1,2,\cdots,r$, then $\mathcal{A}$ is called \emph{symmetric}.
\begin{lem}\label{lem1}
\emph{\cite{qi2013}} Let $\mathcal{A}$ be an \( r \)-order \( n \)-dimensional symmetric nonnegative tensor. Then 
$$\mu \left( \mathcal{A}\right)=\max \left\{ {{x}^{\top}}\mathcal{A}{{x}^{r-1}}:\sum\limits_{i=1}^{n}{x_{i}^{r}=1, \left( x_1, x_2, \ldots, x_n \right)^{\top}\in\mathbb{R}_{+}^{n} } \right\},$$
where $\mathbb{R}_{+}^{n}$ denotes the set of all $n$-dimensional vectors with nonnegative components.
\end{lem}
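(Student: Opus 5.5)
We prove the two inequalities separately. Throughout, write $f(x)=x^{\top}\mathcal{A}x^{r-1}=\sum_{i_1,\ldots,i_r}a_{i_1\cdots i_r}x_{i_1}\cdots x_{i_r}$ and $S=\{x\in\mathbb{R}^n_+:\sum_i x_i^r=1\}$. The set $S$ is compact and $f$ is continuous, so the maximum $M=\max_{x\in S}f(x)$ is attained. The goal is to show $M=\mu(\mathcal{A})$.

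\emph{Step 1: $M\ge\mu(\mathcal{A})$.} We use the weak Perron--Frobenius theorem for nonnegative tensors: $\mu(\mathcal{A})$ is an eigenvalue of $\mathcal{A}$ with a nonnegative eigenvector. If we cannot cite this directly in the reducible case, we obtain it in two moves. First, for the positive tensor $\mathcal{A}_\varepsilon=\mathcal{A}+\varepsilon\mathcal{J}$, where $\mathcal{J}$ is the all-ones tensor, the Perron--Frobenius theorem of Chang--Pearson--Zhang applies. Second, we let $\varepsilon\to0$: we normalize the positive eigenvectors on $S$, extract a convergent subsequence, and use continuity of $\mu(\mathcal{A}_\varepsilon)$ in $\varepsilon$. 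This limiting step is the main technical point of the argument.

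Now take a nonnegative eigenvector $x$ for $\mu(\mathcal{A})$, normalized so that $x\in S$. Contracting the eigen-equation with $x$ gives
\[
f(x)=\sum_i x_i(\mathcal{A}x^{r-1})_i=\mu(\mathcal{A})\sum_i x_i^r=\mu(\mathcal{A}),
\]
and hence $M\ge\mu(\mathcal{A})$.

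\emph{Step 2: $M\le\mu(\mathcal{A})$.} Let $x\in S$ be a maximizer of $f$. Because $\mathcal{A}$ is symmetric, $\nabla f(x)=r\,\mathcal{A}x^{r-1}$. Apply the KKT conditions to maximizing $f$ subject to $\sum_i x_i^r=1$ and $x_i\ge 0$. Constraint qualification holds: the gradient $r x^{[r-1]}$ of the equality constraint vanishes exactly on the coordinates where $x_i=0$, so it is linearly independent of the active constraint gradients $e_i$. KKT then yields $\lambda\in\mathbb{R}$ and $\nu_i\ge0$, with $\nu_i x_i=0$, such that
\[
r(\mathcal{A}x^{r-1})_i+\nu_i=\lambda r x_i^{r-1}\quad\text{for all } i.
\]
We read off two cases.
\begin{itemize}
\item If $x_i>0$, then $\nu_i=0$, so $(\mathcal{A}x^{r-1})_i=\lambda x_i^{r-1}$.
\item If $x_i=0$, then $(\mathcal{A}x^{r-1})_i=-\nu_i/r\le0$. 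Since $\mathcal{A}\ge0$ and $x\ge0$, also $(\mathcal{A}x^{r-1})_i\ge0$, so $(\mathcal{A}x^{r-1})_i=0=\lambda x_i^{r-1}$.
\end{itemize}
Hence $\mathcal{A}x^{r-1}=\lambda x^{[r-1]}$ with $x\neq0$, so $\lambda$ is an eigenvalue of $\mathcal{A}$. Contracting with $x$ as in Step 1 gives $\lambda=f(x)=M$. Therefore $M\in\Lambda(\mathcal{A})$, and so $M\le\mu(\mathcal{A})$.

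Combining the two steps gives the claimed identity $\mu(\mathcal{A})=M$.
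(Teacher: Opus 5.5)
The paper does not prove this lemma; it quotes it from Qi's 2013 paper on symmetric nonnegative tensors, so there is no in-paper argument to compare against. Your proof is correct.

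\textbf{Step 2.} This is the substantive part, and it is right. The constraint-qualification check holds because $x^{[r-1]}$ is nonzero and supported exactly off the active set. The multiplier sign convention $\nabla f+\nu=\lambda\nabla g$ with $\nu\ge 0$ is the correct one for a maximization. The case $x_i=0$ is closed properly using $\mathcal{A}\ge 0$. Symmetry is used only here, through $\nabla f=r\mathcal{A}x^{r-1}$, and it is genuinely needed for $M\le\mu(\mathcal{A})$. For instance, the $2\times 2$ matrix with a single off-diagonal $1$ has $\mu=0$ but $M=1/2$.

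\textbf{Step 1.} This is valid but heavier than necessary. It is essentially the Yang--Yang proof of the weak Perron--Frobenius theorem, and its limit step relies on continuity of the spectrum, via the roots of the characteristic polynomial. You can bypass all of that. Let $\lambda$ be any eigenvalue with eigenvector $y\in\mathbb{C}^n\setminus\{0\}$. Nonnegativity of $\mathcal{A}$ and the triangle inequality give $|\lambda|\,|y_i|^{r-1}=|(\mathcal{A}y^{r-1})_i|\le(\mathcal{A}|y|^{r-1})_i$. Multiplying by $|y_i|$ and summing gives $|\lambda|\sum_i|y_i|^r\le f(|y|)$, hence $|\lambda|\le M$ by homogeneity. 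This bounds every eigenvalue, so $\mu(\mathcal{A})\le M$ follows directly, without symmetry, Perron--Frobenius, or any perturbation.

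Combined with your Step 2, which shows $M$ is itself an eigenvalue with a nonnegative eigenvector, this proves the lemma elementarily. It also recovers the weak Perron--Frobenius property for symmetric nonnegative tensors as a consequence instead of assuming it.
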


Yang and Yang \cite{YANGYANg2010} obtained the following conclusion.

\begin{lem}\label{lem4}
\emph{\cite{YANGYANg2010}} Let \( \mathcal{A} = \left( a_{i_1 i_2 \ldots i_r} \right) \)  be an $r$-order $n$-dimensional nonnegative tensor. Then 
\[    \underset{1\le i\le n}{\mathop{\min }}\,\sum\limits_{{{i}_{2}},\ldots ,{{i}_{r}}=1}^{n}{{{a}_{i{{i}_{2}}\ldots {{i}_{r}}}}}\le \mu \left( \mathcal{A} \right)\le \underset{1\le i\le n}{\mathop{\max }}\,\sum\limits_{{{i}_{2}},\ldots ,{{i}_{r}}=1}^{n}{{{a}_{i{{i}_{2}}\ldots {{i}_{r}}}}}.                   \]
\end{lem}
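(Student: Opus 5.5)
The plan is to prove both inequalities through the Perron--Frobenius theory for nonnegative tensors, using an eigenvector whose components have an extremal modulus. Write $R_i=\sum_{i_2,\ldots,i_r=1}^n a_{i i_2\ldots i_r}$ for the $i$-th ``row sum''.

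\emph{Upper bound.} Let $\lambda\in\Lambda(\mathcal{A})$ with eigenvector $x\neq 0$, and choose an index $i$ with $|x_i|=\max_j |x_j|>0$. Take absolute values in the $i$-th equation of $\mathcal{A}x^{r-1}=\lambda x^{[r-1]}$. Since the entries of $\mathcal{A}$ are nonnegative, this gives
\[
|\lambda|\,|x_i|^{r-1}\le \sum_{i_2,\ldots,i_r} a_{i i_2\ldots i_r}|x_{i_2}|\cdots|x_{i_r}|\le R_i\,|x_i|^{r-1}.
\]
Dividing by $|x_i|^{r-1}$ yields $|\lambda|\le R_i\le\max_j R_j$. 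Taking the maximum over all $\lambda$ gives the right-hand inequality.

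\emph{Lower bound.} First treat a tensor $\mathcal{B}$ with all entries positive. The Perron--Frobenius theorem for positive tensors (Chang--Pearson--Zhang) says that $\mu(\mathcal{B})$ is an eigenvalue with a strictly positive eigenvector $y$. Choose $i$ with $y_i=\min_j y_j>0$. The $i$-th equation then gives
\[
\mu(\mathcal{B})\,y_i^{r-1}=\sum_{i_2,\ldots,i_r} b_{i i_2\ldots i_r}y_{i_2}\cdots y_{i_r}\ge R_i(\mathcal{B})\,y_i^{r-1}.
\]
Hence $\mu(\mathcal{B})\ge \min_j R_j(\mathcal{B})$. For a general nonnegative $\mathcal{A}$, apply this to $\mathcal{A}_\varepsilon=\mathcal{A}+\varepsilon\mathcal{J}$, where $\mathcal{J}$ is the all-ones tensor. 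This gives $\mu(\mathcal{A}_\varepsilon)\ge \min_j R_j(\mathcal{A})+\varepsilon n^{r-1}$. Finally let $\varepsilon\to 0^+$.

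\emph{Main obstacle.} The hardest step is justifying $\mu(\mathcal{A}_\varepsilon)\to\mu(\mathcal{A})$, since eigenvalues of tensors are not defined by a single characteristic polynomial in the naive sense. I would first try the characteristic polynomial built from the resultant, $\phi_{\mathcal{A}}(\lambda)=\mathrm{Res}(\lambda x^{[r-1]}-\mathcal{A}x^{r-1})$. Its roots are exactly $\Lambda(\mathcal{A})$, it is monic up to a constant, and its coefficients are polynomial in the entries. Roots of such polynomials depend continuously on the coefficients, so $\mu(\cdot)$ is continuous.

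If that route gets delicate, an alternative uses only Lemma~\ref{lem1}-type monotonicity. One notes that $\mu(\mathcal{A}_\varepsilon)$ is nonincreasing as $\varepsilon\downarrow 0$. A limit of normalized nonnegative Perron vectors $y_\varepsilon$ is a nonnegative eigenvector of $\mathcal{A}$ for the eigenvalue $\lim\mu(\mathcal{A}_\varepsilon)$. Since this limit is at most $\mu(\mathcal{A})$ by definition and at least $\mu(\mathcal{A})$ by monotonicity, the two are equal, and the lower bound passes to the limit.
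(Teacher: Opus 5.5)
The paper does not prove this lemma; it quotes it from \cite{YANGYANg2010}, so there is no in-paper argument to compare against. Your proof is correct and is essentially the standard Perron--Frobenius argument for such row-sum bounds. A component of maximal modulus of an arbitrary eigenvector gives the upper bound. A minimal component of a positive Perron vector of the positive tensor $\mathcal{A}+\varepsilon\mathcal{J}$, followed by $\varepsilon\to0^+$, gives the lower bound.

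Your resultant route to continuity is sound: $\mathrm{Res}(\lambda x^{[r-1]}-\mathcal{A}x^{r-1})$ has degree $n(r-1)^{n-1}$ and leading coefficient $\pm1$ in $\lambda$ for every $\mathcal{A}$.

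In your second route, Lemma~\ref{lem1} is stated only for symmetric tensors, so it does not give monotonicity for a general nonnegative $\mathcal{A}$. Monotonicity is not needed, though. The only inequality you use is $\lambda^*\le\mu(\mathcal{A})$ for a subsequential limit $\lambda^*$ of the Perron eigenvalues of $\mathcal{A}_\varepsilon$, and this holds because the limiting normalized vector makes $\lambda^*$ an eigenvalue of $\mathcal{A}$. A convergent subsequence exists because your upper bound gives $\mu(\mathcal{A}_\varepsilon)\le\max_jR_j(\mathcal{A})+\varepsilon n^{r-1}$. For the same reason you only need some positive eigenpair of $\mathcal{A}_\varepsilon$, not the fact that its eigenvalue equals $\mu(\mathcal{A}_\varepsilon)$.

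In the setting where the paper applies the lemma (the symmetric clique tensors in \eqref{corss}), the lower bound follows in one line from Lemma~\ref{lem1}. Taking $x=n^{-1/r}(1,\ldots,1)^{\top}$ gives $\mu(\mathcal{A})\ge\frac1n\sum_iR_i\ge\min_iR_i$, which is even the stronger average-row-sum bound. Your perturbation argument is what the statement needs in its full non-symmetric generality.
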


     Let $d_{G,r}(i)$ be the number of $r$-cliques containing vertex $i$ in $G$.  By Lemma \ref{lem4},
     we have 
     \begin{equation}
\underset{1\le i\le n}{\mathop{\min }}\,{{d}_{G,r}}\left( i \right)\le {{\mu }_{r}}\left( {{G}} \right)\le \underset{1\le i\le n}{\mathop{\max }}\,{{d}_{G,r}}\left( i \right).
\label{corss}
\end{equation}

For two $r$-order $n$-dimensional nonnegative tensors $\mathcal{A}$ and $\mathcal{B}$, if $\mathcal{A}-\mathcal{B}$ is a nonnegative but nonzero tensor, then we write $\mathcal{B} < \mathcal{A}$.

\begin{lem}\label{5}
\emph{\cite{Khan2015}}
Let $\mathcal{A}$ and $\mathcal{B}$ be two $r$-order $n$-dimensional nonnegative tensors. If $\mathcal{A} < \mathcal{B}$ and $\mathcal{B}$ is weakly irreducible, then $\mu(\mathcal{A}) < \mu(\mathcal{B})$.
\end{lem}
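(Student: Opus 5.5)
The plan is to compare a nonnegative Perron eigenvector of $\mathcal{A}$ with a positive Perron eigenvector of $\mathcal{B}$ by a maximum-ratio argument, using two standard Perron--Frobenius facts for tensors.

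\textbf{Facts I would quote.} First, for any nonnegative tensor $\mathcal{A}$ (Yang and Yang \cite{YANGYANg2010}), $\rho:=\mu(\mathcal{A})$ is an eigenvalue with a nonnegative nonzero eigenvector $x$, i.e. $\mathcal{A}x^{r-1}=\rho x^{[r-1]}$. Second, for a weakly irreducible nonnegative tensor $\mathcal{B}$ (Friedland--Gaubert--Han), $\sigma:=\mu(\mathcal{B})$ has a strictly positive eigenvector $y$, i.e. $\mathcal{B}y^{r-1}=\sigma y^{[r-1]}$. Weak irreducibility means that the digraph on $[n]$ is strongly connected, where $i\to j$ whenever $b_{i i_2\ldots i_r}>0$ for some index tuple containing $j$.

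\textbf{Step 1: the weak inequality $\rho\le\sigma$.} Set $z_i=x_i/y_i$, $M=\max_i z_i>0$, and $I=\{i: z_i=M\}$. For $i\in I$, since $0\le \mathcal{A}\le\mathcal{B}$ entrywise and $x\le My$,
\[
\rho x_i^{r-1}=(\mathcal{A}x^{r-1})_i\le(\mathcal{B}x^{r-1})_i\le M^{r-1}(\mathcal{B}y^{r-1})_i=M^{r-1}\sigma y_i^{r-1}=\sigma x_i^{r-1}.
\]
Because $x_i>0$, this gives $\rho\le\sigma$.

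\textbf{Step 2: strictness.} Suppose, for contradiction, that $\rho=\sigma$. Then every inequality in the chain is an equality for each $i\in I$. The equality $(\mathcal{B}x^{r-1})_i=M^{r-1}(\mathcal{B}y^{r-1})_i$ forces the following: whenever $b_{i i_2\ldots i_r}>0$, we have $x_{i_j}=My_{i_j}$ for every $j$. In other words, every out-neighbour of $i$ in the digraph of $\mathcal{B}$ lies in $I$. So $I$ is a nonempty set closed under out-neighbours. Strong connectivity then gives $I=[n]$, so $x=My$ is strictly positive.

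Now the first equality gives $\big((\mathcal{B}-\mathcal{A})x^{r-1}\big)_i=0$ for all $i$. But $\mathcal{B}-\mathcal{A}$ is nonnegative and nonzero, and $x>0$, so some coordinate of $(\mathcal{B}-\mathcal{A})x^{r-1}$ is positive. This is a contradiction, hence $\mu(\mathcal{A})<\mu(\mathcal{B})$.

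\textbf{Main obstacle.} The delicate step is the equality-propagation in Step 2, where weak irreducibility must be translated correctly into closure of $I$ under the digraph. In particular, all indices $i_2,\ldots,i_r$ of a positive entry must be forced into $I$, not just one of them. This works because each factor satisfies $x_{i_j}\le My_{i_j}$ and every product term has a positive coefficient.
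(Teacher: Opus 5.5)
Your proof is correct. The paper gives no proof of this lemma; it simply imports it from \cite{Khan2015}. The usual argument in the literature gets strictness from the Yang--Yang tensor analogue of Wielandt's theorem: if $|\mathcal{A}|\le\mathcal{B}$, $\mathcal{B}$ is weakly irreducible and $\mu(\mathcal{A})=\mu(\mathcal{B})$, then $\mathcal{A}=e^{i\theta}D^{-(r-1)}\mathcal{B}D$ for a diagonal $D$ with unimodular entries. Taking moduli of a nonnegative $\mathcal{A}$ then forces $\mathcal{A}=\mathcal{B}$.

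Your max-ratio (Collatz--Wielandt type) argument takes a different route. It needs only a nonnegative Perron vector $x$ for $\mathcal{A}$ and a positive Perron vector $y$ for $\mathcal{B}$, and you carry out the equality propagation by hand. The one delicate point is handled correctly. From $\prod_j x_{i_j}=M^{r-1}\prod_j y_{i_j}$ with $0\le x_{i_j}\le My_{i_j}$ and $My_{i_j}>0$, every factor must be extremal. So all of $i_2,\dots,i_r$ lie in $I$, which is exactly closure of $I$ under out-neighbours in the digraph $G_{\mathcal{B}}$ as the paper defines it.

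Your route is self-contained. It also reproves the weak monotonicity $\mu(\mathcal{A})\le\mu(\mathcal{B})$ along the way, and uses neither symmetry nor any irreducibility of $\mathcal{A}$. The Wielandt route is shorter once that theorem is available. It also gives more: the comparison for complex $\mathcal{A}$ with $|\mathcal{A}|\le\mathcal{B}$, together with an explicit description of the equality case.
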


For an $r$-order $n$-dimensional nonnegative  tensor $\mathcal{A} = \left( a_{i_1 i_2 \ldots i_r} \right)$, the digraph of $\mathcal{A}$ is defined as the digraph $G_\mathcal{A}$ with vertex set $\{1,2,\cdots,n\}$ and the set of arcs $\{(i,j )| a_{ii_2\cdots i_r}\ne 0, j\in \{i_2,\cdots,i_r\}\}$. Furthermore, if $G_\mathcal{A}$ is strongly connected, then $\mathcal{A}$ is called weakly irreducible. Otherwise, the tensor is called weakly reducible.

\begin{lem}\label{lem2}
\emph{\cite{YangY2011}}  If $\mathcal{A}$ is nonnegative weakly irreducible tensor, then spectral radius $\mu \left( \mathcal{A} \right)$ is the unique positive eigenvalue of $\mathcal{A}$, with the unique positive eigenvector $x$, up to a positive scaling coefficient.
\end{lem}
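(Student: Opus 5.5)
We need to prove Lemma \ref{lem2}, the last statement: if $\mathcal{A}$ is a nonnegative weakly irreducible tensor, then $\mu(\mathcal{A})$ is the unique positive eigenvalue and has a positive eigenvector that is unique up to scaling. This is the Perron--Frobenius theorem for weakly irreducible nonnegative tensors. I would prove it in three steps: existence of a positive eigenpair, identification of its eigenvalue with $\mu(\mathcal{A})$, and uniqueness.

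\emph{Existence.} Consider the map $T(x)=(\mathcal{A}x^{r-1})^{[1/(r-1)]}$ on the simplex $\Delta=\{x\ge 0,\ \sum x_i=1\}$. Weak irreducibility means no row of $\mathcal{A}$ is zero (when $n\ge 2$), and I expect it also gives $\mathcal{A}x^{r-1}\neq 0$ for every $x\in\Delta$. To use Brouwer's fixed point theorem I would work with the perturbed map $x\mapsto (T(x)+\varepsilon\mathbf{1})/\|T(x)+\varepsilon\mathbf{1}\|_1$. This gives, for each $\varepsilon>0$, a positive eigenpair of the perturbed tensor $\mathcal{A}+\varepsilon\mathcal{J}$, where $\mathcal{J}$ is the all-ones tensor. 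Letting $\varepsilon\to 0$ along a subsequence, compactness yields $x\ge 0$ and $\lambda\ge 0$ with $\mathcal{A}x^{r-1}=\lambda x^{[r-1]}$.

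\emph{Positivity.} Let $S=\{i: x_i=0\}$ and suppose $S$ is nonempty. For $i\in S$ we have $0=\lambda x_i^{r-1}=(\mathcal{A}x^{r-1})_i$. Hence every nonzero entry $a_{i i_2\cdots i_r}$ has some index $i_j\in S$. I expect strong connectivity of $G_{\mathcal{A}}$ to force an arc from $S$ to its complement. Turning that into a contradiction is the delicate point, since an arc $(i,j)$ only says $j$ appears in some nonzero entry alongside possibly other indices in $S$. The standard route, following Friedland--Gaubert--Han, handles this instead with the fact that $T$ is order-preserving and homogeneous on the cone. Weak irreducibility then makes $T^{m}$ (or $(I+T)^{m}$) send nonzero nonnegative vectors to positive ones, and this gives $x>0$ and $\lambda>0$. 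This is the step I expect to be the main obstacle.

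\emph{Maximality and uniqueness.} Take any eigenpair $(\nu,y)$ and put $z=|y|$. The triangle inequality gives $\mathcal{A}z^{r-1}\ge |\nu| z^{[r-1]}$. Let $t=\max_i z_i/x_i$. Comparing at an index attaining the maximum yields $|\nu|\le\lambda$, so $\lambda=\mu(\mathcal{A})$. Now suppose $\nu>0$ has a positive eigenvector $y$. Running the same Collatz--Wielandt min/max ratio argument in both directions, $\min_i (\mathcal{A}x^{r-1})_i/x_i^{r-1}\le \nu \le \max_i(\cdots)$, combined with the scaling $t$, forces $\nu=\lambda$. For uniqueness of the eigenvector, suppose $y\neq cx$ and take $t$ minimal with $tx\ge y$. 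Irreducibility and monotonicity of $T$ then give strict inequality after finitely many iterations, contradicting the minimality of $t$. So $y=cx$.
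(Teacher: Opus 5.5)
The paper gives no proof of this lemma; it quotes it from Yang and Yang (in substance it is the Friedland--Gaubert--Han Perron--Frobenius theorem for weakly irreducible tensors), so your proposal has to stand on its own. Your construction of a nonnegative eigenpair is fine. Your maximality and uniqueness steps are also fine once $x>0$ is known. The eigenvector uniqueness works because $tx$ and $y$ are both positive: for $i\in J=\{i: tx_i=y_i\}$, comparing $(\mathcal{A}(tx)^{r-1})_i=(\mathcal{A}y^{r-1})_i$ term by term forces every index of every nonzero $a_{ii_2\cdots i_r}$ into $J$. So no arc of $G_{\mathcal{A}}$ leaves $J$, and one step suffices.

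The gap is the positivity step, on which everything else rests. The mechanism you invoke is false: weak irreducibility does not make $T^m$ or $(I+T)^m$ send nonzero nonnegative vectors to positive ones. That property characterizes the stronger irreducibility of Chang, Pearson and Zhang. Take $\mathcal{A}=\mathcal{A}_3(K_3)$. It is weakly irreducible, and it is exactly the kind of tensor this paper feeds into the lemma. Here $\mathcal{A}x^2=(x_2x_3,x_1x_3,x_1x_2)^{\top}$, so $T(e_1)=0$ and $(I+T)^me_1=e_1$ for all $m$. The same example refutes your expectation that $\mathcal{A}x^{r-1}\neq 0$ on $\Delta$, though that is harmless since you perturb. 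Worse, $e_1$ is itself a nonnegative eigenvector (eigenvalue $0$) with zero entries. So no argument that uses only the limiting relation $\mathcal{A}x^{r-1}=\lambda x^{[r-1]}$, $x\ge 0$, can give $x>0$. The obstruction you noticed (an arc $(i,j)$ may carry other indices in $S$) is genuine at the level of the limit.

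What is missing is a bound on the approximants that is uniform in $\varepsilon$. Your fixed points satisfy $x^{(\varepsilon)}>0$ and $\mathcal{A}(x^{(\varepsilon)})^{r-1}\le\lambda_\varepsilon(x^{(\varepsilon)})^{[r-1]}$ with $\lambda_\varepsilon\le\Lambda$. (Your map actually yields $T(x)+\varepsilon\mathbf{1}=cx$ rather than an eigenpair of $\mathcal{A}+\varepsilon\mathcal{J}$, but this inequality is all that is needed.) Let $m_\varepsilon=\min_k x^{(\varepsilon)}_k$. Suppose $(i,j)$ is an arc coming from $a=a_{ii_2\cdots i_r}>0$ with $j\in\{i_2,\dots,i_r\}$. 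Bounding the other factors below by $m_\varepsilon$ gives $\Lambda(x^{(\varepsilon)}_i)^{r-1}\ge a\,x^{(\varepsilon)}_j m_\varepsilon^{r-2}$, that is, $x^{(\varepsilon)}_j/m_\varepsilon\le(\Lambda/a)(x^{(\varepsilon)}_i/m_\varepsilon)^{r-1}$. Start at an index where the minimum is attained and follow directed paths in the strongly connected digraph $G_{\mathcal{A}}$. This yields $\max_j x^{(\varepsilon)}_j\le C\,m_\varepsilon$, with $C$ depending only on $\Lambda$, the smallest positive entry of $\mathcal{A}$, $n$ and $r$. Hence the normalized limit is positive, and $\lambda>0$ because every row of $\mathcal{A}$ has a nonzero entry. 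This is the boundedness of the sets $\{x>0:\mathcal{A}x^{r-1}\le\lambda x^{[r-1]}\}$ in Hilbert's projective metric, which underlies the Gaubert--Gunawardena theorem used by Friedland--Gaubert--Han. It is what replaces your $(I+T)^m$ claim.

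Finally, your last paragraph proves the intended reading of the lemma, namely uniqueness among eigenvalues admitting a positive eigenvector. Read literally, ``unique positive eigenvalue'' is false already for $r=2$: the irreducible matrix with rows $(2,1)$ and $(1,2)$ has eigenvalues $3$ and $1$.
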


\indent In 2023, Liu and Bu \cite{liu2023mantel} proposed the concept of \emph{$r$-clique connected}: an $r$-clique walk is a sequence of vertices and $r$-cliques \( v_1c_{r}^{(1)}v_2c_{r}^{(2)} \ldots v_m c_{r}^{(m)}v_{m+1} \) where \( c_{r}^{(i)} \) and \( c_{r}^{(i+1)} \) intersect in at least one vertex for \( i =1,2,\cdots,m-1 \). If any two vertices of a graph are connected to each other by $r$-clique walk, then the graph is called $r$-clique connected, which is equivalent to the following conclusion.
\begin{lem}\label{lem22}
  Let $G$ be a graph and $G'$  obtained from $G$ by deleting edges which are not contained in any $r$-cliques. Then  $G$ is $r$-clique connected if and only if $G'$  is connected.  
\end{lem}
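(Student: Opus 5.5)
We need to prove Lemma \ref{lem22}: $G$ is $r$-clique connected iff $G'$ is connected, where $G'$ is obtained by deleting the edges lying in no $r$-clique. The argument is a direct two-way translation between $r$-clique walks in $G$ and ordinary walks in $G'$. One caveat first: vertices lying in no $r$-clique are isolated in $G'$. We therefore read "any two vertices" as ranging over the vertices covered by $r$-cliques (equivalently, we assume every vertex lies in some $r$-clique, as is implicit when $r$-clique connectivity is used for weak irreducibility of $\mathcal{A}_r(G)$). We also use that $G$ and $G'$ have the same $r$-cliques, since every edge inside an $r$-clique survives in $G'$.

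\emph{($\Rightarrow$)} Suppose $G$ is $r$-clique connected and take vertices $u,v$ with an $r$-clique walk $u=v_1c_r^{(1)}v_2\cdots v_mc_r^{(m)}v_{m+1}=v$. We read the walk in the standard way: $v_i,v_{i+1}\in c_r^{(i)}$. Each clique $c_r^{(i)}$ induces a complete subgraph of $G'$, because all its edges lie in an $r$-clique. Hence $v_i$ and $v_{i+1}$ are equal or adjacent in $G'$. Concatenating these steps gives a $u$–$v$ walk in $G'$. The hypothesis that consecutive cliques intersect is not even needed in this direction.

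\emph{($\Leftarrow$)} Suppose $G'$ is connected and take a path $u=w_0w_1\cdots w_t=v$ in $G'$. Each edge $w_{j-1}w_j$ of $G'$ lies in some $r$-clique $c^{(j)}$ of $G$. Consecutive cliques $c^{(j)}$ and $c^{(j+1)}$ share the vertex $w_j$. Therefore $w_0c^{(1)}w_1c^{(2)}\cdots c^{(t)}w_t$ is an $r$-clique walk from $u$ to $v$. In the trivial case $u=v$, we use any $r$-clique containing $u$ as a one-step walk $u\,c\,u$.

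The only real obstacle is definitional. We must fix precisely what it means for a vertex to be incident to a clique in the walk, and we must handle the vertices lying in no $r$-clique. We handle these by the stated convention: restrict to vertices covered by $r$-cliques, equivalently delete isolated vertices of $G'$. The rest is routine.
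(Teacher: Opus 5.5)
Your proof is correct, and it is exactly the direct unwinding of the definitions that the paper leaves implicit: the paper gives no proof and presents the lemma as a reformulation of Liu and Bu's notion of $r$-clique connectivity. Your opening caveat is unnecessary, though. For $n\ge 2$, a vertex lying in no $r$-clique is isolated in $G'$ and lies on no $r$-clique walk, so both sides of the equivalence fail together and the lemma holds verbatim; your two directions never use the restriction. Actually adopting the ``covered vertices only'' reading would break the link with Lemma \ref{lem3}, since e.g.\ $K_r\cup K_1$ would then count as $r$-clique connected although $\mathcal{A}_r(K_r\cup K_1)$ is weakly reducible.
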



\begin{lem}\label{lem3}
\emph{\cite{liu2023mantel}} For a graph $G$, $r$-clique tensor $\mathcal{A}_r(G)$  is weakly irreducible if and only if $G$ is $r$-clique connected.
\end{lem}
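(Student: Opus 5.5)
The plan is to translate weak irreducibility of $\mathcal{A}_r(G)$ into a statement about an ordinary graph, and then invoke Lemma \ref{lem22}. The key observation is that the digraph $G_{\mathcal{A}}$ of $\mathcal{A}=\mathcal{A}_r(G)$ is exactly the graph $G'$ of Lemma \ref{lem22}, with each edge replaced by two opposite arcs.

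First I would identify the arcs of $G_{\mathcal{A}}$. By Definition \ref{wei1}, $a_{i i_2\ldots i_r}\neq 0$ if and only if $\{i,i_2,\ldots,i_r\}\in C_r(G)$. In that case the indices are $r$ distinct vertices forming an $r$-clique. Hence $(i,j)$ is an arc of $G_{\mathcal{A}}$ if and only if $j\neq i$ and some $r$-clique of $G$ contains both $i$ and $j$.

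Conversely, suppose $\{i,j\}$ lies in an $r$-clique $\{i,j,i_3,\ldots,i_r\}$. Then the entry $a_{i j i_3\ldots i_r}=\frac{1}{(r-1)!}\neq 0$, so $(i,j)$ is an arc. The condition is symmetric in $i$ and $j$, so $(j,i)$ is an arc as well.

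Next I would compare with $G'$. An edge $ij$ of $G$ lies in some $r$-clique exactly when some $r$-clique contains both $i$ and $j$, since any two vertices of a clique are adjacent. So the arc set of $G_{\mathcal{A}}$ is precisely $\{(i,j),(j,i) : ij\in E(G')\}$.

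A digraph obtained by doubling every edge of an undirected graph is strongly connected if and only if that graph is connected. This holds because directed paths correspond to undirected paths in both directions. Therefore $\mathcal{A}_r(G)$ is weakly irreducible if and only if $G'$ is connected, and by Lemma \ref{lem22} this holds if and only if $G$ is $r$-clique connected.

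There is no real obstacle here; the only point needing care is degenerate cases. A vertex lying in no $r$-clique is isolated both in $G'$ and in $G_{\mathcal{A}}$, so for $n\ge 2$ both sides of the equivalence fail simultaneously, which is consistent. The trivial case $n=1$ should be treated by the conventions for both notions.

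If one prefers not to rely on Lemma \ref{lem22}, the same argument can be run directly. A directed path $i=v_1\to v_2\to\cdots\to v_{m+1}=j$ in $G_{\mathcal{A}}$ yields cliques $c^{(t)}\ni v_t,v_{t+1}$, and consecutive cliques share the vertex $v_{t+1}$. Conversely, an $r$-clique walk yields a directed path by moving within each clique and through the shared vertices.
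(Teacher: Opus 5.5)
Your proof is correct. The paper itself does not prove this lemma; it quotes it from \cite{liu2023mantel}. Its own deductions also run in the opposite direction to yours: it combines Lemma~\ref{lem22} with Lemma~\ref{lem3} to obtain Corollary~\ref{cor0}, namely that $\mathcal{A}_r(G)$ is weakly irreducible if and only if $G'$ is connected.

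Your main route instead proves Corollary~\ref{cor0} directly. You observe that nonzero entries of $\mathcal{A}_r(G)$ have pairwise distinct indices, so $G_{\mathcal{A}}$ is exactly the symmetric digraph obtained by doubling the edges of $G'$. You then deduce Lemma~\ref{lem3} via Lemma~\ref{lem22}. This is not circular, because Lemma~\ref{lem22} is purely graph-theoretic.

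The paper also states Lemma~\ref{lem22} without proof, however. So it is your closing direct argument that makes the proof genuinely self-contained: directed paths in $G_{\mathcal{A}}$ become $r$-clique walks, and conversely. That argument relies on the intended reading $v_t,v_{t+1}\in c^{(t)}$ of an $r$-clique walk, which the paper's wording leaves implicit.

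Your ordering has one advantage. Corollary~\ref{cor0} is the form actually used later, in Lemma~\ref{dis}, Corollary~\ref{55} and Claim~\ref{claim2}. Your argument shows it follows from the definition of $G_{\mathcal{A}}$ alone, with no appeal to the cited result.
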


By Lemmas \ref{lem22} and \ref{lem3}, we obtain the following corollary. 
\begin{cor}\label{cor0}
    Let  $G$ be a graph and $G'$  obtained from $G$ by deleting edges which not contained in any $r$-cliques. Then $r$-clique tensor $\mathcal{A}_r(G)$  is weakly irreducible if and only if $G'$   is connected. 
\end{cor}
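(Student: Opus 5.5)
The statement to prove is Corollary \ref{cor0}. The plan is simply to chain Lemma \ref{lem3} and Lemma \ref{lem22}, applied to the same graph $G$ and the same $r$.

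First, by Lemma \ref{lem3}, the $r$-clique tensor $\mathcal{A}_r(G)$ is weakly irreducible if and only if $G$ is $r$-clique connected. Second, by Lemma \ref{lem22}, $G$ is $r$-clique connected if and only if $G'$ is connected, where $G'$ is obtained from $G$ by deleting every edge not contained in any $r$-clique. Composing the two equivalences gives exactly the statement.

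There is no real obstacle. The only point to check is that both lemmas use the same $G'$ and the same notion of $r$-clique connectedness, and they do.

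If one wanted Lemma \ref{lem22} itself justified, the argument would go as follows. An $r$-clique walk between $u$ and $v$ yields a path in $G'$, because consecutive cliques share a vertex and each clique induces a complete graph whose edges all survive in $G'$. Conversely, each edge of a $u$–$v$ path in $G'$ lies in some $r$-clique, and consecutive such cliques share the common endpoint of their edges, so the cliques form an $r$-clique walk.

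A degenerate case should be handled by the convention implicit in the definition: an isolated vertex of $G'$ (for $n\ge 2$) breaks both connectivity of $G'$ and weak irreducibility, consistent with the equivalence.
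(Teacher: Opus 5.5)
Your proposal is correct and takes the same route as the paper, which derives the corollary by directly chaining Lemma \ref{lem3} ($\mathcal{A}_r(G)$ is weakly irreducible iff $G$ is $r$-clique connected) with Lemma \ref{lem22} ($G$ is $r$-clique connected iff $G'$ is connected). Your supplementary sketch of Lemma \ref{lem22} and your remark on isolated vertices of $G'$ are sound, though the paper does not need them.
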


\begin{lem}\label{shao}\emph{\cite{shao2013}}
For a weakly reducible tensor $\mathcal{A}$, there is a lower triangular block tensor such that $\mathcal{A}$ is permutational similar to it and each diagonal block tensor is weakly irreducible. Furthermore, there is a diagonal block tensor such that its spectral radius is equal to $\mu(\mathcal{A})$.
\end{lem}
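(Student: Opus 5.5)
The plan is to build the block form from the strongly connected components of the digraph $G_{\mathcal{A}}$. Then we compare $\mu(\mathcal{A})$ with the spectral radii of the diagonal blocks, using a nonnegative eigenvector for one inequality and monotonicity for the other. Throughout, $\mathcal{A}$ is an $r$-order $n$-dimensional nonnegative tensor, and a $1$-dimensional block is regarded as weakly irreducible by convention.

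\emph{Step 1 (ordering the components).} Let $V_1,\dots,V_m$ be the strongly connected components of $G_{\mathcal{A}}$. Since $\mathcal{A}$ is weakly reducible, $m\ge 2$. The condensation of $G_{\mathcal{A}}$ is acyclic, so we can order the components topologically so that every arc $(i,j)$ with $i\in V_p$ has $j\in V_q$ for some $q\le p$. Relabel the indices so that $V_1,\dots,V_m$ occupy consecutive intervals; this is a permutational similarity $P\mathcal{A}P^{\top}$. By the definition of the arcs, $a_{ii_2\cdots i_r}\ne 0$ with $i\in V_p$ forces $i_2,\dots,i_r\in V_1\cup\cdots\cup V_p$. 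This is exactly the lower triangular block structure, with diagonal blocks $\mathcal{A}_p=\mathcal{A}[V_p]$, the entries whose indices all lie in $V_p$.

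\emph{Step 2 (weak irreducibility of the blocks).} This is the step I expect to be the main obstacle. The digraph of $\mathcal{A}_p$ only keeps arcs coming from entries with all indices in $V_p$. Arcs in $G_{\mathcal{A}}$ that come from ``mixed'' entries, involving also earlier blocks, are lost. Hence $G_{\mathcal{A}_p}$ may fail to be strongly connected even though $V_p$ is strongly connected in $G_{\mathcal{A}}$. I would handle this by recursion: if some $\mathcal{A}_p$ is weakly reducible, apply Step 1 to $\mathcal{A}_p$ alone and replace $V_p$ by the resulting finer ordered partition. Lower triangularity of $\mathcal{A}$ is preserved, because a nonzero entry with first index in a sub-block of $V_p$ has its remaining indices either in earlier blocks $V_q$, $q<p$, or, when all lie in $V_p$, in earlier sub-blocks by the triangular structure of $\mathcal{A}_p$. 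The dimension strictly decreases at each refinement, so the procedure terminates with all diagonal blocks weakly irreducible.

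\emph{Step 3 (the spectral radius).} For each block, $\mathcal{A}\ge \mathcal{A}_p\oplus 0$ entrywise, so Lemma \ref{lem1} (or the general monotonicity of the spectral radius of nonnegative tensors) gives $\mu(\mathcal{A}_p)\le\mu(\mathcal{A})$. Conversely, by the Perron--Frobenius theorem for nonnegative tensors there is a nonnegative nonzero $x$ with $\mathcal{A}x^{r-1}=\mu(\mathcal{A})x^{[r-1]}$. Let $p$ be the smallest index with $x|_{V_p}\ne 0$. For $i\in V_p$, lower triangularity shows that $(\mathcal{A}x^{r-1})_i$ only involves coordinates in $V_1\cup\cdots\cup V_p$, and those in earlier blocks vanish. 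Hence $(\mathcal{A}x^{r-1})_i=(\mathcal{A}_p x_p^{r-1})_i$, where $x_p=x|_{V_p}$. Thus $\mathcal{A}_p x_p^{r-1}=\mu(\mathcal{A})x_p^{[r-1]}$ with $x_p\ne 0$, so $\mu(\mathcal{A})$ is an eigenvalue of $\mathcal{A}_p$ and $\mu(\mathcal{A})\le\mu(\mathcal{A}_p)$. Combining the two inequalities, $\mu(\mathcal{A}_p)=\mu(\mathcal{A})$ for this $p$.
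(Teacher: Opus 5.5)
Your Steps 1 and 3 are fine. The paper gives no proof of this lemma and only quotes \cite{shao2013}; your eigenvector argument in Step 3 is a self-contained way to get the spectral-radius part. The gap is the sentence in Step 2 asserting that lower triangularity survives the refinement.

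Your dichotomy ``the remaining indices lie either in earlier blocks, or all in $V_p$'' omits exactly the mixed entries you identified as the danger. These are nonzero entries whose first index lies in a sub-block $W$ of $V_p$ and whose other indices lie partly in some $V_q$ with $q<p$ and partly in $V_p$. Such an entry is not an entry of $\mathcal{A}_p$, so the triangular structure of $\mathcal{A}_p$ says nothing about it, and its $V_p$-indices can lie in a sub-block after $W$. Concretely, let $r=n=3$, $a_{231}=a_{321}=1$, and all other entries $0$. Then $V_1=\{1\}$ and $V_2=\{2,3\}$, and the block $\mathcal{A}[\{2,3\}]$ is the zero tensor, hence weakly reducible. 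Splitting $\{2,3\}$ into $\{2\},\{3\}$ in either order violates triangularity, because of $a_{231}$ or of $a_{321}$.

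No other argument can close this gap in the stated generality. In this example, $a_{231}\neq 0$ and $a_{321}\neq 0$ force $2$ and $3$ into a common block, with $1$ in that block or an earlier one. So the only lower triangular block forms are $\{1\},\{2,3\}$ and the single block $\{1,2,3\}$, and each has a weakly reducible diagonal block. Hence, with the triangularity of your Step 1 (the standard one), ``each diagonal block is weakly irreducible'' is false for general nonnegative tensors. There are two honest repairs.

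(a) Assume $\mathcal{A}$ is symmetric. This is all the paper needs, since it applies the lemma only to clique tensors, in Lemma \ref{dis}. If $a_{i_1\cdots i_r}\neq 0$, symmetry makes every permuted entry nonzero, so any two distinct indices in $\{i_1,\dots,i_r\}$ are joined by arcs in both directions. Hence no nonzero entry meets two strongly connected components, and $\mathcal{A}$ is permutationally similar to a block diagonal tensor. Moreover, $G_{\mathcal{A}[V_p]}$ is the subdigraph of $G_{\mathcal{A}}$ induced on $V_p$, so it is strongly connected, and Step 2 becomes unnecessary.

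(b) Alternatively, prove a modified statement with a weaker block condition: every nonzero entry with first index in $V_p$ either has all its indices in $V_p$ or has some index in $V_1\cup\dots\cup V_{p-1}$. This condition is preserved by your refinement. Step 3 still works, because terms containing an index in $V_1\cup\dots\cup V_{p-1}$ vanish (as $x$ is zero there), and the remaining nonzero terms are entries of $\mathcal{A}_p$.
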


In fact, Since $G'$ obtained from $G$ by deleting edges which are not contained in any $r$-cliques, we have $\mathcal{A}_r\left( G \right)=\mathcal{A}_r\left( G' \right)$ and $\mu_r(G)=\mu_r(G').$ The following lemma can be directly derived from  Corollary \ref{cor0} and  Lemma \ref{shao}. 

\begin{lem}\label{dis}
    Let $G$ be a graph and let $G'$ obtained from $G$ by deleting edges which not contained in any $r$-cliques. If  $G$ is not  connected, then there exists 
    a connected component $H$ in $G'$ such that $\mu_r(H) = \mu_r(G')=\mu_r(G).$
\end{lem}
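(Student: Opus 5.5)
The plan is to prove the lemma through the variational characterization in Lemma \ref{lem1}, rather than relying only on the block decomposition of Lemma \ref{shao}. This avoids having to check that the diagonal blocks of Lemma \ref{shao} correspond exactly to components.

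First, I will check that $\mathcal{A}_r(G)=\mathcal{A}_r(G')$. Deleting an edge that lies in no $r$-clique does not destroy any $r$-clique. It also cannot create one, since $G'$ is a subgraph of $G$. Hence $C_r(G)=C_r(G')$, so $\mu_r(G)=\mu_r(G')$.

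Next, let $H_1,\dots,H_m$ be the connected components of $G'$. Every $r$-clique of $G'$ induces a connected subgraph, so it lies inside a single $H_j$. Consequently, after a permutation of indices, $\mathcal{A}_r(G')$ is block diagonal with diagonal blocks $\mathcal{A}_r(H_1),\dots,\mathcal{A}_r(H_m)$; a block is zero when $H_j$ has no $r$-clique, for instance when $H_j$ is an isolated vertex. For any $x\in\mathbb{R}^n_+$ this gives
\[x^{\top}\mathcal{A}_r(G')x^{r-1}=\sum_{j=1}^m x_{(j)}^{\top}\mathcal{A}_r(H_j)x_{(j)}^{r-1},\]
where $x_{(j)}$ denotes the restriction of $x$ to $V(H_j)$.

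Now I bound each term. Put $t_j=\sum_{i\in V(H_j)}x_i^r$. Each summand is homogeneous of degree $r$ in $x_{(j)}$. Whenever $t_j>0$, apply Lemma \ref{lem1} to $\mathcal{A}_r(H_j)$ with the normalized vector $x_{(j)}/t_j^{1/r}$. This shows that the $j$-th summand is at most $\mu_r(H_j)\,t_j$; when $t_j=0$ the summand is $0$. If $\sum_i x_i^r=1$, then $\sum_j t_j=1$, and therefore
\[x^{\top}\mathcal{A}_r(G')x^{r-1}\le \max_j \mu_r(H_j).\]
Taking the maximum over $x$ and using Lemma \ref{lem1} gives $\mu_r(G')\le\max_j\mu_r(H_j)$.

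For the reverse inequality, choose $H$ to be a component attaining $\max_j\mu_r(H_j)$. Take a nonnegative maximizer for $H$ from Lemma \ref{lem1} and extend it by zeros to all of $V(G)$. The displayed decomposition shows that this vector achieves $\mu_r(H)$ for $\mathcal{A}_r(G')$, so $\mu_r(G')\ge\mu_r(H)$. Combining the two inequalities yields $\mu_r(H)=\mu_r(G')=\mu_r(G)$.

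The only point requiring care is the block-diagonal claim, namely that no $r$-clique meets two components of $G'$, and this is immediate from the connectivity of cliques. As a consistency check with the route suggested by the paper: each component $H$ containing an $r$-clique satisfies $H'=H$, which is connected. By Corollary \ref{cor0}, the blocks $\mathcal{A}_r(H)$ are then weakly irreducible, matching the diagonal blocks in Lemma \ref{shao}.
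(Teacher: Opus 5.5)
Your proof is correct, but it takes a different route from the paper. The paper justifies the lemma in one line. It notes $\mathcal{A}_r(G)=\mathcal{A}_r(G')$ and reads the conclusion off Corollary~\ref{cor0} together with Lemma~\ref{shao}, the normal form of a weakly reducible tensor as a lower triangular block tensor with weakly irreducible diagonal blocks, one of which carries the spectral radius. That route works for arbitrary nonnegative tensors. However, it leaves implicit the identification you flagged. The diagonal blocks are the principal subtensors on the strong components of the digraph $G_{\mathcal{A}}$. For $\mathcal{A}_r(G')$ these strong components are exactly the connected components of $G'$, with isolated vertices giving $1$-dimensional zero blocks, because $i\to j$ is an arc precisely when $i,j$ lie in a common $r$-clique, i.e.\ when $ij\in E(G')$. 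Your argument replaces that structural theorem with the variational characterization of Lemma~\ref{lem1}, which is available because clique tensors are symmetric. Block-diagonality (no $r$-clique meets two components) plus homogeneity gives $x^{\top}\mathcal{A}_r(G')x^{r-1}\le\sum_j t_j\,\mu_r(H_j)$, and extending a maximizer by zeros gives the reverse bound. This is more elementary and self-contained, never needs weak irreducibility of the blocks, and yields the sharper identity $\mu_r(G)=\max_j\mu_r(H_j)$ over all components of $G'$. Note also that neither argument uses the hypothesis that $G$ is disconnected, which should presumably read $G'$. When $G'$ is connected one simply takes $H=G'$, and your proof covers that case automatically.
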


By Lemma \ref{5} and Corollary \ref{cor0}, we obtain the following corollary. 

\begin{cor}\label{55}
     Let $G$ be a connected graph and each edge in $G$ is contained in at least one $r$-clique. If $H$ is a proper subgraph of $G$, then $\mu_r(H)<\mu_r(G)$.
\end{cor}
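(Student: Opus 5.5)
The statement to prove is Corollary~\ref{55}: if $G$ is connected, every edge of $G$ lies in some $r$-clique, and $H$ is a proper subgraph of $G$, then $\mu_r(H)<\mu_r(G)$. The plan is to compare the two $r$-clique tensors entrywise and then apply Lemma~\ref{5}, with Corollary~\ref{cor0} supplying the weak irreducibility that lemma requires.

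\textbf{Setting up a common dimension.} Assume first that $V(H)=V(G)$. If $H$ has fewer vertices, add the missing vertices of $V(G)$ to $H$ as isolated vertices. Isolated vertices lie in no $r$-clique, so the tensor only gains zero rows. On the zero part, any eigenvector supported there gives eigenvalue $0$. Hence the spectral radius is unchanged (equivalently, use Lemma~\ref{lem1}: mass placed on isolated vertices contributes nothing). So we may take $\mathcal{A}_r(H)$ and $\mathcal{A}_r(G)$ to be $r$-order $n$-dimensional tensors with $n=|V(G)|$.

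\textbf{Entrywise comparison.} Every $r$-clique of $H$ is an $r$-clique of $G$, so $0\le \mathcal{A}_r(H)\le \mathcal{A}_r(G)$ entrywise. We need strict inequality in at least one entry. Since $H$ is proper, either $H$ misses some edge $uv$ of $G$, or $H$ misses some vertex $w$ of $G$.
\begin{itemize}
\item If $H$ misses an edge $uv$: by hypothesis $uv$ lies in some $r$-clique $S$ of $G$. Then $S$ is not a clique of $H$, so the entries of $\mathcal{A}_r(G)-\mathcal{A}_r(H)$ indexed by permutations of $S$ equal $\frac{1}{(r-1)!}>0$.
\item If $H$ misses a vertex $w$: if $G$ has an edge, connectedness gives an edge at $w$, which lies in an $r$-clique of $G$ but not of $H$, and the previous case applies. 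If $G$ has no edges, then $G=K_1$ (connected), which has no proper subgraph with vertices; this degenerate case is vacuous or excluded.
\end{itemize}
In every case $\mathcal{A}_r(H)<\mathcal{A}_r(G)$ in the sense defined before Lemma~\ref{5}.

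\textbf{Weak irreducibility.} Every edge of $G$ lies in an $r$-clique, so the graph $G'$ of Corollary~\ref{cor0} (delete edges lying in no $r$-clique) equals $G$. Since $G$ is connected, Corollary~\ref{cor0} shows that $\mathcal{A}_r(G)$ is weakly irreducible. Lemma~\ref{5} then gives $\mu_r(H)=\mu(\mathcal{A}_r(H))<\mu(\mathcal{A}_r(G))=\mu_r(G)$.

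The only delicate point is the bookkeeping when $H$ has fewer vertices, i.e.\ padding $H$ with isolated vertices without changing $\mu_r(H)$; the remaining steps are direct applications of the cited results.
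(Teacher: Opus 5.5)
Your proof is correct and follows the paper's route exactly: the paper derives this corollary directly from Lemma~\ref{5} (strict spectral monotonicity when the larger tensor is weakly irreducible) together with Corollary~\ref{cor0} (weak irreducibility of $\mathcal{A}_r(G)$, since here $G'=G$ is connected). Your padding of $H$ with isolated vertices and your check that $\mathcal{A}_r(G)-\mathcal{A}_r(H)$ has a positive entry (via a missing edge lying in an $r$-clique) make explicit details that the paper leaves implicit.
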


Liu and Bu \cite{liu2023mantel} obtained the $r$-clique spectral radius of complete $r$-partite graphs.
\begin{lem}\label{r-bu}
\emph{\cite{liu2023mantel}} Let $G$ be a complete $r$-partite graph with vertex classes $V_1,V_2,\ldots,V_{r}$. Then
   $\mu_{r}(G)=\left( \prod_{i=1}^{r}|V_i| \right)^{\frac{r-1}{r}}.$
\end{lem}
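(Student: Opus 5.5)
The lemma is the formula $\mu_r(G)=\left(\prod_{i=1}^r |V_i|\right)^{\frac{r-1}{r}}$ for a complete $r$-partite graph $G$ with classes $V_1,\dots,V_r$. The plan is to exhibit an explicit positive eigenvector and then identify its eigenvalue with the spectral radius through weak irreducibility.

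Write $n_i=|V_i|$ and $P=\prod_{i=1}^r n_i$. The $r$-cliques of $G$ are exactly the transversals $\{v_1,\dots,v_r\}$ with $v_j\in V_j$. Hence for $v\in V_i$,
\[
(\mathcal{A}_r(G)x^{r-1})_v=\frac{1}{(r-1)!}\sum_{(i_2,\dots,i_r)} x_{i_2}\cdots x_{i_r}=\prod_{j\ne i}\Big(\sum_{u\in V_j}x_u\Big).
\]
Here the sum runs over ordered $(r-1)$-tuples completing $v$ to a transversal. Each unordered completion appears $(r-1)!$ times, which cancels the normalization factor.

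Next I would use the ansatz $x_v=a_i$ for every $v\in V_i$, with $a_i>0$. Put $s_j=n_ja_j$ and $S=\prod_j s_j$. The eigen-equation for $v\in V_i$ becomes
\[
\frac{S}{s_i}=\lambda a_i^{r-1}.
\]
Multiplying by $a_i$ gives $S/n_i=\lambda a_i^{r}$. So we choose $a_i=n_i^{-1/r}$, which yields $\lambda=S$. With this choice $s_j=n_j^{1-1/r}$, so
\[
\lambda=\prod_j n_j^{\frac{r-1}{r}}=P^{\frac{r-1}{r}}.
\]
Substituting back confirms that the equation holds for every $i$ simultaneously.

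It remains to show that this $\lambda$ is $\mu_r(G)$. Every edge of $G$ joins two distinct classes, so it lies in some transversal and therefore in an $r$-clique. Also, $G$ is connected when $r\ge2$ and all $n_i\ge1$. By Corollary \ref{cor0}, $\mathcal{A}_r(G)$ is weakly irreducible. By Lemma \ref{lem2}, the positive eigenvalue with a positive eigenvector is exactly the spectral radius, so $\mu_r(G)=P^{\frac{r-1}{r}}$.

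The only mildly delicate step is the counting that turns the ordered-tuple sum with the $\frac{1}{(r-1)!}$ factor into the product of class sums. Everything else is routine.
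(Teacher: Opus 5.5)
Your argument is correct. The paper does not prove this lemma itself; it imports it from Liu and Bu \cite{liu2023mantel}, so there is no in-paper proof to compare against. Your route uses exactly the tools the paper sets up: Corollary \ref{cor0} gives weak irreducibility, and Lemma \ref{lem2} identifies the eigenvalue of a positive eigenvector with $\mu_r(G)$. It is also the same class-constant eigen-equation technique the paper uses in Lemma \ref{use2}, run in the constructive direction. You exhibit the positive eigenvector $x_v=|V_i|^{-1/r}$ ($v\in V_i$) and then invoke uniqueness, whereas Lemma \ref{use2} starts from the Perron vector and uses symmetry to reduce to one equation per class.

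Your counting step is right, and so is the observation that every edge lies in a transversal, so deleting edges outside $r$-cliques changes nothing and the graph stays connected. The only tacit assumption is that every $V_i$ is nonempty; if some class is empty, both sides are $0$.

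If you want to bypass Perron--Frobenius entirely, Lemma \ref{lem1} gives a short alternative. With $s_j=\sum_{u\in V_j}x_u$ and $t_j=\sum_{u\in V_j}x_u^r$, one has $x^{\top}\mathcal{A}_r(G)x^{r-1}=r\prod_j s_j\le r\prod_j |V_j|^{\frac{r-1}{r}}t_j^{\frac{1}{r}}\le\left(\prod_j|V_j|\right)^{\frac{r-1}{r}}$ by H\"older and AM--GM (using $\sum_j t_j=1$). Equality holds at your vector rescaled to unit $\ell^r$-norm. That version needs no irreducibility and handles the degenerate case automatically, while yours has the advantage of writing down the Perron vector explicitly.
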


Notice for $a\in \mathbb{R}$, \(\lfloor a \rfloor >a-1\). By Lemma \ref{r-bu}, for $1\le m\le r-1$, we can give the $r$-clique spectral radius of $K_m \vee T_{r-m}(n-m)$.
\begin{cor}\label{use} Let $ 2\le m+1 \le r \le n$. Then
   \begin{eqnarray}
    \mu_{r}(K_m \vee T_{r-m}(n-m))&= & \left( \prod_{i=1}^{r-m} 
 \left\lfloor \frac{n-m-1+i}{r-m}\right\rfloor \right)^{\frac{r-1}{r}}\nonumber 
     >\left(\frac{n-r}{r-m}\right)^{\frac{(r-1)(r-m)}{r}}. \nonumber
\end{eqnarray}
\end{cor}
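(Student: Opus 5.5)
The plan is to observe that $K_m \vee T_{r-m}(n-m)$ is a complete $r$-partite graph and then apply Lemma \ref{r-bu}. The $m$ vertices of $K_m$ each form a singleton class. The graph $T_{r-m}(n-m)$ contributes $r-m$ further classes whose sizes differ by at most one. Every vertex of $K_m$ is joined to all other vertices, and vertices in distinct Turán classes are adjacent. Hence the graph is complete $r$-partite with vertex classes of sizes $1,\dots,1$ ($m$ times) and $s_1,\dots,s_{r-m}$. Here $s_i$ are the balanced part sizes of $n-m$ vertices into $r-m$ parts.

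The standard formula for balanced partitions gives $s_i=\lfloor (n-m+i-1)/(r-m)\rfloor$ for $i=1,\dots,r-m$. I would verify this by writing $n-m=q(r-m)+t$ with $0\le t<r-m$. Then exactly $t$ of the indices $i$ give $q+1$ and the rest give $q$, since $i-1$ ranges over $0,\dots,r-m-1$. Substituting into Lemma \ref{r-bu}, the singleton classes contribute the factor $1$. This gives the stated equality
\[
\mu_r(K_m\vee T_{r-m}(n-m))=\Big(\prod_{i=1}^{r-m}\Big\lfloor \tfrac{n-m-1+i}{r-m}\Big\rfloor\Big)^{\frac{r-1}{r}}.
\]

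For the strict inequality, I would use $\lfloor a\rfloor>a-1$. This gives
\[
\Big\lfloor \tfrac{n-m-1+i}{r-m}\Big\rfloor>\tfrac{n-m-1+i}{r-m}-1=\tfrac{n-r-1+i}{r-m}\ge \tfrac{n-r}{r-m}
\]
for every $i\ge1$. Each factor is therefore strictly greater than $(n-r)/(r-m)\ge 0$, because $n\ge r$. Multiplying the $r-m$ positive factors, and noting that at least one factor exists since $r-m\ge1$, gives a product strictly exceeding $((n-r)/(r-m))^{r-m}$. Raising both sides to the positive power $(r-1)/r$ preserves the strict inequality, which yields the claimed lower bound.

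The only mildly delicate point is bookkeeping. One must confirm that the floor expressions really are the Turán part sizes, and that all quantities are nonnegative so that monotonicity of products and powers applies. Both follow from $2\le m+1\le r\le n$. No real obstacle is expected.
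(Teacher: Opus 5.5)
Your proposal is correct and follows the paper's argument. The paper likewise treats $K_m\vee T_{r-m}(n-m)$ as a complete $r$-partite graph with $m$ singleton classes, applies Lemma~\ref{r-bu}, and uses $\lfloor a\rfloor>a-1$ for the lower bound. You additionally supply the bookkeeping the paper leaves implicit: that the floor expressions are the balanced part sizes, and the positivity needed for the strict monotonicity steps.
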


\begin{lem}\label{lem6}\emph{\cite{yuan20221}}
Let $r\ge 3$ and $n\ge2kr^3$. Then 
\[ex(n,K_r,(k+1)K_r)\le kr^2\left(\frac{n-kr}{r-1}\right)^{r-1}.  \]
\end{lem}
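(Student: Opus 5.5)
The plan is a covering argument: take a maximal family of pairwise disjoint $r$-cliques, let $S$ be the vertices it covers, and count the $r$-cliques of $G$ by how many vertices they have in $S$. Throughout, $G$ is a $(k+1)K_r$-free graph on $n\ge 2kr^3$ vertices.

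Choose a maximal family of pairwise vertex-disjoint $r$-cliques in $G$. Since $G$ is $(k+1)K_r$-free, it has at most $k$ members. Let $S$ be the union of their vertex sets, so $|S|\le kr$. By maximality, $G-S$ is $K_r$-free, hence every $r$-clique of $G$ meets $S$. Write $m=\frac{n-kr}{r-1}$; this is the quantity in the target bound, and $G-S$ has at least $n-kr=(r-1)m$ vertices. Split $C_r(G)$ according to $j=|c\cap S|$, where $1\le j\le r$.

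Fix a $j$-subset $J\subseteq S$. Each $r$-clique $c$ with $c\cap S=J$ is determined by $c\setminus S$, which is an $(r-j)$-clique of the $K_r$-free graph $G-S$. For $j=r$ the clique is just $J$ itself. For $1\le j\le r-1$, the theorem of Zykov and Erd\H{o}s (the maximum number of $K_s$ in a $K_r$-free graph is attained by $T_{r-1}$) gives at most
\[\binom{r-1}{r-j}\Big(\frac{|V(G-S)|}{r-1}\Big)^{r-j}\le \binom{r-1}{j-1}m^{r-j}\]
such cliques. This uses $|V(G-S)|\le n-|S|$, and we may assume $|S|=kr$ after padding, or simply bound by $n-kr$ when $|S|=kr$. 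I would handle the case $|S|<kr$ by noting the count only improves, or by adding dummy vertices to $S$. For $j=1$ this gives at most $m^{r-1}$ cliques per vertex of $S$, so at most $kr\,m^{r-1}$ cliques in total.

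For $j\ge2$ the total is at most $\binom{kr}{j}\binom{r-1}{j-1}m^{r-j}\le kr\,m^{r-1}\,\big(\tfrac{kr\cdot r}{m}\big)^{j-1}$, using crude estimates $\binom{kr}{j}\le kr\,(kr)^{j-1}$ and $\binom{r-1}{j-1}\le r^{j-1}$. This is the step that needs the hypothesis on $n$. We have $n-kr\ge 2kr^3-kr\ge 2kr^2(r-1)$, since $2kr^2-kr>0$, so $m\ge 2kr^2$ and $\frac{kr^2}{m}\le\frac12$. The terms with $j\ge2$ therefore sum to at most $kr\,m^{r-1}\sum_{j\ge2}2^{1-j}\le kr\,m^{r-1}$.

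Altogether $|C_r(G)|\le 2kr\,m^{r-1}\le kr^2\,m^{r-1}$, since $r\ge3$. This is the claimed bound.

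The main obstacle is the estimate for cliques with two or more vertices in $S$. A naive bound of $n^{r-j}$ for their outside parts loses a factor exponential in $r$. Using the Zykov--Erd\H{o}s bound on $(r-j)$-cliques inside the $K_r$-free graph $G-S$ is what keeps every term at the scale $m^{r-j}$, so that the polynomial condition $n\ge 2kr^3$ suffices.
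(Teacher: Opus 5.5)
The paper gives no proof of this lemma. It is quoted from Yuan and Yang and used as a black box in Claim 1, so your proposal has to stand on its own, and it does. The maximal disjoint family gives a set $S$ with $|S|\le kr$ that meets every $r$-clique, and $G-S$ is $K_r$-free. Zykov's theorem bounds the number of extensions of each $j$-subset of $S$ by $\binom{r-1}{j-1}m^{r-j}$; you use Maclaurin's inequality implicitly here, to pass from ``$T_{r-1}$ is extremal'' to this explicit bound. Finally, $(kr)^j r^{j-1} m^{r-j}=kr\,m^{r-1}(kr^2/m)^{j-1}$ and $m\ge 2kr^2$, so the $j\ge 2$ terms form a geometric series bounded by the $j=1$ term.

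The one step to tighten is the vertex count of $G-S$. The clique bound increases with the number of vertices, so you need the upper bound $|V(G-S)|\le n-kr$. You first state a lower bound instead, and the upper bound is false when $|S|<kr$. Commit to padding:
\begin{itemize}
\item Replace $S$ by an arbitrary superset $S'$ with $|S'|=kr$, which is possible since $n\ge kr$.
\item $G-S'$ is still $K_r$-free, and every $r$-clique still meets $S'$.
\item $|V(G-S')|=n-kr$ exactly, and this is all the rest of your argument uses.
\end{itemize}
Your alternative, that ``the count only improves'', is also true. It needs a check, for instance that $t\mapsto t(n-tr)^{r-1}$ is increasing for $0\le t\le k$, which uses $n\ge kr^2$. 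Padding makes this unnecessary.

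With that fix you actually obtain $|C_r(G)|<2kr\,m^{r-1}$, which is stronger than the stated bound. The argument also needs only $r\ge 2$, not $r\ge 3$. This matters here: Claim 1 applies the lemma to cliques of order $2r-k-1$, which equals $2$ when $k=2r-3$, outside the lemma's stated range $r\ge 3$.
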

\begin{lem}\label{use2}
   Let $n\ge 5$, $G=K_3\vee (n-3)K_1$. Then \[\mu_3(G)=\left( \sqrt[3]{\sqrt{3}(n-3) + \sqrt{3(n-3)^2 - \frac{1}{27}}} + \sqrt[3]{\sqrt{3}(n-3) - \sqrt{3(n-3)^2 - \frac{1}{27}}} \right)^2.\]
\end{lem}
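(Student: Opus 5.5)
We need to prove Lemma \ref{use2}: $\mu_3(K_3\vee (n-3)K_1)$ equals the stated expression. The plan is to use the symmetry of the graph to reduce the eigen-equation to two variables, and then solve the resulting cubic by Cardano.

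\textbf{Setup and symmetry reduction.} Let $G=K_3\vee (n-3)K_1$ with $n\ge 5$. Write $A=\{a_1,a_2,a_3\}$ for the vertices of $K_3$ and $B$ for the $n-3$ independent vertices.

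The $3$-cliques are $A$ itself and the $3(n-3)$ triples $\{a_i,a_j,b\}$ with $b\in B$. Every edge lies in a $3$-clique, and $G$ is connected. So by Corollary \ref{cor0}, $\mathcal{A}_3(G)$ is weakly irreducible. By Lemma \ref{lem2}, $\mu=\mu_3(G)$ has a unique positive eigenvector $x$, up to scaling. Since any automorphism of $G$ permutes $x$ into another positive eigenvector, uniqueness forces $x$ to be constant on $A$ (value $a$) and constant on $B$ (value $b$).

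\textbf{Eigen-equations.} With entries $1/2!$, we have $(\mathcal{A}x^{2})_i=\sum_{\{i,j,k\}\in C_3}x_jx_k$. For a vertex of $A$ the $3$-cliques through it are $A$ and the $2(n-3)$ triples with one other vertex of $A$ and one of $B$. For a vertex of $B$ they are the $3$ triples containing two vertices of $A$. This gives
\[
a^2+2(n-3)ab=\mu a^2,\qquad 3a^2=\mu b^2 .
\]
Set $t=b/a>0$. Then $\mu=1+2(n-3)t$ and $\mu=3/t^2$.

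\textbf{Solving for $\mu$.} Put $s=\sqrt{\mu}$, so $t=\sqrt3/s$. Substituting gives $s^2=1+2\sqrt3(n-3)/s$, that is,
\[
s^3-s-2\sqrt3(n-3)=0 .
\]
This is the depressed cubic $s^3+ps+q=0$ with $p=-1$ and $q=-2\sqrt3(n-3)$. Its discriminant quantity is
\[
\frac{q^2}{4}+\frac{p^3}{27}=3(n-3)^2-\frac1{27},
\]
which is positive for $n\ge 4$. Hence the cubic has exactly one real root, and since $s=\sqrt{\mu}>0$ must be a real root, $s$ is that root. Cardano's formula gives
\[
s=\sqrt[3]{\sqrt3(n-3)+\sqrt{3(n-3)^2-\tfrac1{27}}}+\sqrt[3]{\sqrt3(n-3)-\sqrt{3(n-3)^2-\tfrac1{27}}},
\]
and $\mu=s^2$ is exactly the claimed formula.

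\textbf{Main point of care.} The only real subtlety is justifying that the spectral radius is the positive eigenvalue whose eigenvector is symmetric. This follows from the weak irreducibility established above together with the uniqueness in Lemma \ref{lem2}. The remaining steps, choosing the real Cardano root and noting the positive discriminant, are routine.
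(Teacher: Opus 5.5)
Your proposal is correct and follows the paper's proof: weak irreducibility together with the uniqueness in Lemma~\ref{lem2} gives a positive eigenvector that is constant on the $K_3$ part and on the independent part. This leads to the same system $\mu a^2=a^2+2(n-3)ab$, $\mu b^2=3a^2$; the paper states the solution directly, while you spell out the reduction to $s^3-s-2\sqrt{3}(n-3)=0$ with $s=\sqrt{\mu}$ and the selection of its unique real Cardano root, which is exactly the step the paper omits.
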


  \begin{proof}
 By Lemmas \ref{lem2} and  \ref{lem3}, there exists a unique  positive eigenvector $x=(x_1,\cdots,x_n)^{\top}$  corresponding to \( \mu_3(G) \) with $x_1^3+\cdots+x_n^3=1$. According to symmetry, the vertex set could be partitioned into two parts ($K_3$ and $(n-3)K_1$), and the eigenvector components in each part are equal, we have
\begin{equation}
\begin{cases}
 &\mu_{3}(G) x_{1}^{2}= x_1^2+2(n-3)x_{1}x_{2}   \nonumber\\ 
 &\mu_{3}(G) x_{2}^{2}=3x_{1}^{2}    \nonumber   
\end{cases}
\end{equation}
From the above equations, we have 
\[
\mu_3(G) = \left( \sqrt[3]{\sqrt{3}(n-3) + \sqrt{3(n-3)^2 - \frac{1}{27}}} + \sqrt[3]{\sqrt{3}(n-3) - \sqrt{3(n-3)^2 - \frac{1}{27}}} \right)^2.
\]
\end{proof}

\section{Proofs of Theorems \ref{sun2} and \ref{cor1} }

\begin{proof}[\textbf{Proof of Theorem \ref{sun2}}] 
For $n\ge (2r-k-1)\left(\mathrm{e}r^4k^{\frac{k}{2}}\right)^{\frac{k}{2k-2r+1}}+k $ and $3\le r\le k\le 2r-1$, let $G$ be a $2K_r$-free graph on $n$ vertices with maximum $\mu_k(G)+\mu_{k+1}(G)+\cdots+\mu_{2r-1}(G)$. We delete all the edges in $G$ which are not contained in any $k$-cliques, forming $G'$, and $\mu_l(G')=\mu_l(G)$ for $l=k,k+1,\cdots,2r-1$. Then $G'$ is a $2K_r$-free graph on $n$ vertices with maximum $\mu_k(G')+\mu_{k+1}(G')+\cdots+\mu_{2r-1}(G')$, and each edge in $G'$  is contained in at least one $k$-clique.

The sketch of proof  is as follows: 
For $k=2r-1$, $G'\cong K_{2r-1}\cup(n-2r+1)K_1$ is obvious. For $r\le k\le 2r-2$,  we first
 show all the $k$-cliques in $G'$ share exactly $2k-2r+1$ common vertices (see \textbf{Claim \ref{claim1}}).  Then we prove  $G'$ is a connected graph (see \textbf{Claim \ref{claim2.1}}). Furthermore, we prove $G'$ is a complete $k$-partite graph (see \textbf{Claim \ref{claim2} Step 1}). Finally, we prove
 $G' \cong K_{2k-2r+1} \vee T_{2r-k-1}(n-2k+2r-1)$ (see \textbf{Claim \ref{claim2} Step 2}). Next we begin the proof.

    If $k=2r-1$, we have the number of $(2r-1)$-cliques in $G'$ is at most one. Otherwise, $G'$ has a copy of $2K_r$. Since each edge in $G'$  is contained in at least one $(2r-1)$-clique, we can easily obtain $G'\cong K_{2r-1}\cup(n-2r+1)K_1$. 

Next we consider that $r\le k\le 2r-2$. We claim that the number of $k$-cliques in $G'$ is at least two. If $G'$ has only one $k$-clique, then $\sum\limits_{i=k}^{2r-1}\mu_i(G')=\mu_k(K_k)$. Since $K_{k+1}$ is $2K_r$-free, by Corollary \ref{55}, we have 
 $\sum\nolimits_{i=k}^{2r-1}\mu_i(K_{k+1})>\mu_k(K_{k+1})>\mu_k(K_{k})
= \sum\nolimits_{i=k}^{2r-1}\mu_i(G')$, a contradiction.

\begin{lem}\label{free}
    For $r\le k\le 2r-2$, let $H$ be an $n$-vertices graph with more than two $k$-cliques and  each edge in $H$ is contained in some $k$-clique. Then $H$ is $2K_r$-free if and only if
 any two $k$-cliques in $H$ share at least $2k-2r+1$ vertices.   
\end{lem}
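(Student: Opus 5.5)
The direction ``$2K_r$-free $\Rightarrow$ large intersections'' I would prove by contraposition, with an explicit construction. Suppose two $k$-cliques $A,B$ of $H$ satisfy $s=|A\cap B|\le 2k-2r$, so $|A\cup B|=2k-s\ge 2r$. Choose $X\subseteq A$ with $|X|=r$ containing $\min\{r,k-s\}$ vertices of $A\setminus B$, and fill the rest of $X$ from $A\cap B$. Then
\[
|B\setminus X|=k-\bigl(r-\min\{r,k-s\}\bigr),
\]
which equals $k\ge r$ if $k-s\ge r$, and equals $2k-s-r\ge r$ otherwise. Hence we can pick an $r$-set $Y\subseteq B\setminus X$. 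Both $X$ and $Y$ induce complete graphs and are disjoint, so $H\supseteq 2K_r$.

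For the converse, assume every two $k$-cliques share at least $t:=2k-2r+1$ vertices, and suppose $X,Y$ are disjoint $r$-cliques in $H$. The plan is to first extract structure from the $t$-intersecting family $\mathcal F=C_k(H)$ of $k$-sets. I expect a dichotomy: either all members of $\mathcal F$ contain a common $t$-set $C$, or $\bigcup\mathcal F$ has at most $2k-t=2r-1$ vertices. I would prove this directly by taking two members $A,B$ and analysing how a third $k$-clique can meet $A\cup B$, in the spirit of Hilton--Milner/Frankl arguments for $t$-intersecting families.

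In the second case, every edge lies in some $k$-clique, so every edge of $H$ lies inside a set of at most $2r-1$ vertices. Then $X\cup Y$, which has $2r$ non-isolated vertices, cannot exist, a contradiction.

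In the first case, every edge lies in a $k$-clique containing $C$. I would then show that $X\setminus C$ lies in a single $k$-clique, which gives $|X\setminus C|\le k-t=2r-k-1$, and likewise for $Y$. It follows that
\[
|X\cap C|+|Y\cap C|\ge 2r-2(2r-k-1)=2k-2r+2>t=|C|,
\]
which contradicts $X\cap Y=\emptyset$.

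The main obstacle is this first case, together with the exact form of the dichotomy itself. Two vertices $u,v\notin C$ that are adjacent lie in a common $k$-clique through $C$. A clique on vertices outside $C$ is, however, only pairwise covered by such $k$-cliques, not covered by one of them. To get around this, I would try the following: take $w$ adjacent to both $u$ and $v$, let $K_{uv},K_{uw},K_{vw}$ be $k$-cliques containing $C$ and the respective edges, and use $|K\cap K'|\ge t$ together with the fact that each of them already contains $C$ to force the petals to coincide. If this rigidity fails in general, the fallback is to bound $|X\setminus C|$ directly by applying the intersection condition to $k$-cliques containing edges of $X$ that avoid $C$.
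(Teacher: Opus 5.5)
Your forward direction is correct; it is an explicit version of the paper's one-line union count. The converse, however, rests on a dichotomy that is false. A family of $k$-cliques that pairwise meet in at least $t=2k-2r+1$ vertices need neither contain a common $t$-set nor have union of size at most $2k-t=2r-1$.

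For $(r,k)=(4,5)$, $t=3$, take a $5$-clique on $S=\{1,\dots,5\}$ and three pairwise non-adjacent vertices $x,y,z$ with $N(x)=\{1,2,3,4\}$, $N(y)=\{2,3,4,5\}$, $N(z)=\{1,3,4,5\}$. The $5$-cliques are exactly $S$, $\{x,1,2,3,4\}$, $\{y,2,3,4,5\}$ and $\{z,1,3,4,5\}$. They cover every edge and pairwise share at least $3$ vertices. Yet their common part is only $\{3,4\}$, and their union has $8>7$ vertices. For $k=r=3$, the paper's own extremal graph $K_3\vee(n-3)K_1$ with $n\ge 6$ breaks it as well. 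The dichotomy is the classical sunflower-or-$(k+1)$-set fact only when $k=2r-2$, i.e.\ $t=k-1$. In general, Frankl-type families (every member has at least $t+1$ vertices in a fixed $(t+2)$-set) escape both of your cases, so no analysis of how a third clique meets $A\cup B$ can establish it.

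Worse, the missing case cannot be filled: the converse itself is false in general. Take $(r,k)=(7,8)$, so $t=3$. Let $X=L\cup\{x_1,x_2,x_3\}$ with $L=\{\ell_1,\dots,\ell_4\}$, and let $Y=\{y_1,\dots,y_7\}$; these are disjoint $7$-cliques. Add the following cross edges:
\begin{itemize}
\item from $\ell_1$ to all of $Y$;
\item from each of $\ell_2,\ell_3,\ell_4$ to $y_1,\dots,y_6$;
\item from $x_1,x_2,x_3$ to $\{y_3,y_4,y_5,y_6\}$, $\{y_1,y_2,y_5,y_6\}$, $\{y_1,y_2,y_3,y_4\}$ respectively.
\end{itemize}
The $8$-cliques are then exactly $\{\ell_1\}\cup Y$, the $8$-subsets of $L\cup\{y_1,\dots,y_6\}$, the $8$-subsets of $L\cup\{x_i\}\cup N_Y(x_i)$, and the three sets $L\cup\{x_i,x_j\}\cup\bigl(N_Y(x_i)\cap N_Y(x_j)\bigr)$. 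A direct check shows they cover every edge and any two share at least $3$ vertices. The tight pairs meet either in three vertices of $L$ or in $\ell_1$ plus two $y$'s. Nevertheless $X\cup Y$ is a $2K_7$.

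The paper's proof does not escape this. Its converse assumes that every $r$-clique lies inside some $k$-clique, and that does not follow from every edge doing so; here $X$ lies in no $8$-clique.

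What is true, and what your argument does establish, is the converse under your Case~1 or Case~2 hypothesis. Case~1 is easier than you expect. Every non-isolated vertex is adjacent to all of $C$, so $C\cup X$ is a clique. If $|X\setminus C|>k-t$, this clique would contain a $k$-clique missing a vertex of $C$, a contradiction. Hence $|X\setminus C|\le k-t$ immediately, with no rigidity argument needed. This star case is the only one in which the paper later applies the converse, in the connectivity claim and in Step~1 of the proof of Theorem~\ref{sun2}.
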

\begin{proof}
    If $H$ is $2K_r$-free, then any two $k$-cliques in $H$ share at least $2k-2r+1$ vertices. Otherwise their union would contain at least $2r$ vertices and form a copy of $2K_r$, a contradiction. 
    
    If any two $k$-cliques in $H$  share at least $2k-2r+1$ vertices, the union of two $k$-cliques in $H$  has at most $2r-1$ vertices.
    Recall that each edge in $H$  is contained in at least one $k$-clique, any $r$-cliques only be found in $k$-cliques, which means $H$ is $2K_r$-free.
\end{proof}
By Lemma \ref{free}, we have any two $k$-cliques in $G'$ must share at least $2k-2r+1$ vertices. Similarly, for $0\le j\le 2r-k-1$, any $k$-clique and $(k+j)$-clique in $G'$ share at least $2k-2r+1+j$ vertices (\textbf{this fact will be used in Claim \ref{claim1}}). Otherwise their union would contain at least $2r$ vertices and form a copy of $2K_r$, a contradiction. 


\begin{claim}\label{claim1}
All the $k$-cliques in $G'$ share exactly $2k-2r+1$ common vertices.
\end{claim}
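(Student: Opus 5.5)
The plan is to show that the intersection $S$ of all $k$-cliques of $G'$ has size at least $2k-2r+1$, and that if it were larger, $\mu_k(G')$ would be too small for $G'$ to be extremal. The ingredients are the pairwise intersection property from Lemma \ref{free}, a sunflower/Helly-type count, and the lower bound on $\mu_k$ of the extremal graph from Corollary \ref{use}.

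\textbf{Step 1: two cases for the family of $k$-cliques.} Let $\mathcal{F}=C_k(G')$; we know $|\mathcal{F}|\ge 2$, and any two members meet in at least $t:=2k-2r+1\ge 1$ vertices. Either every member contains a common $t$-set, or no $t$-set lies in all members. In the second case $\mathcal{F}$ is a $k$-uniform family with pairwise intersections at least $t$ but without a common $t$-core. For such families there is a classical bound (Frankl / Hilton–Milner type, or Erd\H{o}s–Ko–Rado for $t$-intersecting families without a kernel) showing $|\mathcal{F}|$ is bounded by a constant $c(k)$ independent of $n$. A crude bound suffices. Fix $A\in\mathcal{F}$. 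Each $B\in\mathcal{F}$ meets $A$ in a set $T\subseteq A$ with $|T|\ge t$. For fixed $T$, the sets $B\setminus A$ form a family of $(k-|T|)$-sets. Since $T$ is not a common kernel of $\mathcal{F}$, some $C\in\mathcal{F}$ misses part of $T$, and $C$ must meet each such $B$ in at least $t$ vertices. A sunflower argument then bounds the number of $B$ with a given trace by $k^{O(k)}$. Hence $|C_k(G')|\le c(k)$, and by \eqref{corss} (or directly Theorem \ref{Liu}), $\mu_k(G')\le |C_k(G')|\le c(k)$.

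\textbf{Step 2: comparison with the extremal graph.} We have $\mu_l(G')\le \max_i d_{G',l}(i)\le |C_l(G')|$ for each $l$, and every $l$-clique with $l\ge k$ contains $k$-cliques, so $|C_l(G')|\le\binom{l}{k}$-times-bounded and the whole sum $\sum_{l=k}^{2r-1}\mu_l(G')$ is at most a constant $c'(k,r)$. By Corollary \ref{use} with $m=2k-2r+1$,
\[
\mu_k(K_{m}\vee T_{k-m}(n-m))>\left(\frac{n-k}{2r-k-1}\right)^{\frac{(k-1)(2r-k-1)}{k}},
\]
which exceeds $c'(k,r)$ under the hypothesis on $n$; the constant $\mathrm{e}r^4k^{k/2}$ in the theorem should be exactly what makes this comparison go through. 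This contradicts the maximality of $G'$ (the extremal graph is $2K_r$-free). So all $k$-cliques share a common set of at least $t$ vertices.

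\textbf{Step 3: exactly $t$ common vertices.} Suppose the common intersection $S$ has $|S|=s\ge t+1$. Every $k$-clique is then $S$ plus a $(k-s)$-clique in the common neighbourhood graph $H=G'[N_{G'}(S)]$, so $\mu_k(G')$ is governed by the $(k-s)$-cliques of $H$. Every edge lies in a $k$-clique, so cliques of order $>k$ also contain $S$. Now use the bound $|C_{k-s}(H)|\le \binom{k-s}{\,\cdot\,}$-Turán-type estimate $\le (\tfrac{n}{k-s})^{k-s}$ (Zykov), together with Theorem \ref{Liu}:
\[
\mu_k(G')\le \frac{k}{\omega}\binom{\omega}{k}^{1/k}|C_k(G')|^{\frac{k-1}{k}}\lesssim n^{\frac{(k-s)(k-1)}{k}}.
\]
Since $k-s\le 2r-k-2<2r-k-1$, this is a strictly smaller power of $n$ than the lower bound above. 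Higher $\mu_l$ are handled the same way, and the comparison with the explicit constants again uses the hypothesis on $n$. This contradicts maximality, so $|S|=t$.

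The main obstacle is Step 1, the kernel-free $t$-intersecting case. I would first try the simple trace/sunflower counting to get an explicit $|\mathcal{F}|\le k^{O(k)}$ bound compatible with the stated threshold. Failing that, I would show directly that such $G'$ has at most $2r-1+O(1)$ non-isolated vertices, which bounds all clique counts by constants.
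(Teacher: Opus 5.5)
Your Step~1 rests on a false lemma. A $k$-uniform, pairwise $t$-intersecting family with no common $t$-set is \emph{not} bounded in size independently of $n$. The Hilton--Milner/Frankl bound for non-trivial $t$-intersecting families is of order $n^{k-t-1}$, and here $k-t-1=2r-k-2$, which is positive whenever $k\le 2r-3$.

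The paper itself supplies a counterexample. Take $r=k=3$, so $t=1$, and the graph $K_3\vee(n-3)K_1$. It is $2K_3$-free, every edge lies in a triangle, and any two triangles meet. No vertex lies in all triangles, yet there are $3n-8$ of them. This graph is even the extremal graph of Theorem~\ref{cor1} for $7\le n\le 13$. More generally, $K_{2k-2r+3}\vee T_{2r-k-2}(n-2k+2r-3)$ is $2K_r$-free, has $\Theta(n^{2r-k-2})$ $k$-cliques, and their common intersection is empty.

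Your trace/sunflower sketch fails at exactly this point. When $|A\cap B|>t$, another clique $C$ can meet $B$ in $t$ vertices inside $A$, so nothing constrains $B\setminus A$. The same example refutes your fallback, since no vertex is isolated. Step~2's treatment of $l$-cliques for $l>k$ also collapses, because it relied on all cliques living inside the union of boundedly many $k$-cliques.

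What the argument actually needs is $|C_l(G')|=O(n^{2r-k-2})$ for every $k\le l\le 2r-1$ in the kernel-free situation. This still loses to the extremal graph, because $\frac{(k-1)(2r-k-1)}{k}-(2r-k-2)=\frac{2k-2r+1}{k}>0$.

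For $l=k$, a corrected trace count does give this bound. A trace of size $>t$ leaves at most $2r-k-2$ vertices of $B$ outside $A$. A trace $T$ of size exactly $t$ is not contained in some $k$-clique $C$, and then $C$ must meet $B\setminus A$. For $l>k$, however, the naive estimate $|L\cap A|\ge t+l-k$ only gives $|L\setminus A|\le 2r-k-1$. That yields $O(n^{2r-k-1})$, which is too weak.

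The paper gets the right exponent through a different dichotomy.
\begin{itemize}
\item If $G'$ contains the sunflower $F_{r+1,2k-2r+1}$, a pigeonhole argument forces every $k$-clique to contain its kernel. This gives ``exactly $2k-2r+1$'' immediately.
\item If it does not, then for every $(2k-2r+1+j)$-subset of a fixed $k$-clique, the common neighbourhood is $(r+1)K_{2r-k-1}$-free. Lemma~\ref{lem6} then bounds all $(k+j)$-clique counts by $O(n^{2r-k-2})$.
\end{itemize}

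Your Step~3 is a legitimate alternative route to exactness. A kernel of size $s\ge t+1$ makes $G'$ $K_{k+1}$-free with $|C_k(G')|\le\binom{n}{k-s}$ and $k-s\le 2r-k-2$. But it only becomes usable once Step~1 is repaired along these lines.
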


\begin{proof}
Let $F_{r+1,2k-2r+1}$ be a graph with $(r+1)(2r-1-k)+2k-2r+1$ vertices  consisted of $r+1$ cliques with order $k$ (denote these \( k \)-cliques as $K^{(1)}, K^{(2)},\cdots, K^{(r+1)}$),  which intersect in exactly $2k-2r+1$ common vertices (denote as $\{v_1,\cdots,v_{2k-2r+1}\}$). 
 We call $\mathcal{K}=\{v_1,\cdots,v_{2k-2r+1}\}$ the kernel and $K^{(1)}\backslash \mathcal{K} ,\cdots,  K^{(r+1)}\backslash \mathcal{K}$ the  petals of $F_{r+1,2k-2r+1}$.
 
 If $G'$ has a copy of $F_{r+1,2k-2r+1}$ with the kernel $\mathcal{K}$,  we will use proof by contradiction to show all the $k$-cliques in $G'$ contain $\mathcal{K}$. Suppose  $K^{(0)}$ is a $k$-clique in $G'$ such that $|\mathcal{K} \cap K^{(0)}|=l$, where $0\le l \le 2k-2r$. Any two $k$-cliques in $G'$ share at least $2k-2r+1$ vertices implies that $ K^{(0)} $ intersect at most
$\left\lfloor \frac{k - l}{2k - 2r + 1 - l} \right\rfloor$
petals of the subgraph \( F_{r+1,2k-2r+1} \).  We can easily get $\left\lfloor \frac{k - l}{2k - 2r + 1 - l} \right\rfloor$ is monotonically increasing with respect to $l$, then we have $ \left\lfloor \frac{k - l}{2k - 2r + 1 - l} \right\rfloor\le 2r-k\le r$. Since $r<r+1$, we can find a $k$-clique (without loss of generality let $K^{(1)}$) in \( F_{r+1,2k-2r+1} \) such that  $|K^{(1)}\cap K^{(0)}|<2k-2r+1$, which contradicts any two $k$-cliques in $G'$ share at least $2k-2r+1$ vertices. Thus all the $k$-cliques in $G'$ share  $\mathcal{K}$.  Since the $k$-cliques in $F_{r+1,2k-2r+1}$ share exactly kernel $\mathcal{K}$,
thus all the $k$-cliques in $G'$ share exactly $2k - 2r + 1$ common vertices.

Next we consider the case that  $G'$ does not have a copy of $F_{r+1,2k-2r+1}$. Recall that $N_{G}(i_1,\cdots,i_{s})= N_{G}(i_1)\cap N_{G}(i_2)\cap \cdots \cap N_{G}(i_s) $. Let $K$ be a $k$-clique in $G'$. 
Since  every $(k+j)$-clique of $G'$ intersects $K$ in at least $2k-2r+1+j$ vertices for $0\le j\le 2r-k-1$,  we have 
\[ |C_{k+j}(G')|\le \sum\limits_{\{i_1,\cdots,i_{2k-2r+1+j}\}\subset  V(K)}|C_{2r-k-1}(G'[N_{G'}(i_1,\cdots,i_{2k-2r+1+j})])|.  \]

Since $G'$ does not have a copy of $F_{r+1,2k-2r+1}$, we have $G'[N_{G'}(i_1,\cdots,i_{2k-2r+1+j})]$ is $(r+1)K_{2r-k-1}$-free for $\{i_1,\cdots,i_{2k-2r+1+j}\}\subset V(K)$. By Lemma \ref{lem6}, we have 
\begin{align}
&|C_{2r-k-1}(G'[N_{G'}(i_1,\dots,i_{2k-2r+1+j})])| \nonumber \\
&\le \operatorname{ex}(n-2k+2r-1-j,K_{2r-k-1},(r+1)K_{2r-k-1}) \nonumber \\
&\le r(2r-k-1)^2\left(\frac{n-2k+2r-1-j-r(2r-k-1)}{2r-k-2}\right)^{\!2r-k-2}.\nonumber
\end{align}
Thus,  we have 
 \begin{eqnarray}
    |C_{k+j}(G')|&\le & \tbinom{k}{2k-2r+1+j} r(2r-k-1)^2\left(\frac{n-2k+2r-1-j-r(2r-k-1)}{2r-k-2}\right)^{2r-k-2}\nonumber   \\
     & < & \tbinom{k}{2k-2r+1+j}r^3\left(\frac{n-k}{2r-k-2}\right)^{2r-k-2}\nonumber \\
     &<& k^{\frac{k}{2}}r^3\left(\frac{2r-k-1}{2r-k-2}\right)^{2r-k-2}\left(\frac{n-k}{2r-k-1}\right)^{2r-k-2}\nonumber\\
     &\le&\mathrm{e}k^{\frac{k}{2}}r^3\left(\frac{n-k}{2r-k-1}\right)^{2r-k-2}\nonumber, 
\end{eqnarray}
The last inequality is obtained by
$$\left(\frac{2r-k-1}{2r-k-2}\right)^{2r-k-2}=\left(1+\frac{1}{2r-k-2}\right)^{2r-k-2}\le \mathrm{e}.$$

\noindent For $n\ge (2r-k-1)\left(\mathrm{e}r^4k^{\frac{k}{2}}\right)^{\frac{k}{2k-2r+1}}+k $, by Equation \eqref{corss} and Corollary \ref{use}, we have
 \begin{eqnarray}
\sum\limits_{i=k}^{2r-1}\mu_i(G')        &\le& \sum\limits_{i=k}^{2r-1} \underset{1\le j\le n}{\mathop{\max }}\,{{d}_{G',i}}\left( j \right) \le
\sum\limits_{i=k}^{2r-1}|C_i(G')| \nonumber   \\
&<& (2r-k)\mathrm{e}k^{\frac{k}{2}}r^3 \left(\frac{n-k}{2r-k-1}\right)^{2r-k-2} \nonumber 
\le \mathrm{e}r^4k^{\frac{k}{2}}\left(\frac{n-k}{2r-k-1}\right)^{2r-k-2} \\
&= & \mathrm{e}r^4k^{\frac{k}{2}}     \left (\frac{n-k}{2r-k-1}\right)^{\frac{(k-1)(2r-k-1)}{k}}   \left (\frac{2r-k-1}{n-k}\right)^{\frac{2k-2r+1}{k}} \nonumber \\
&\le& \mathrm{e}r^4k^{\frac{k}{2}}   \left   (\frac{n-k}{2r-k-1}\right)^{\frac{(k-1)(2r-k-1)}{k}}   \left (\frac{1}{\mathrm{e}r^4k^{\frac{k}{2}}}\right)    
=    \left  (\frac{n-k}{2r-k-1}\right)^{\frac{(k-1)(2r-k-1)}{k}}             \nonumber \\  &<&\mu_k(K_{2k-2r+1} \vee T_{2r-k-1}(n-2k+2r-1)),\nonumber
\end{eqnarray}
which contradicts $G'$ is a $2K_r$-free graph on $n$ vertices with maximum $\sum\nolimits_{i=k}^{2r-1}\mu_i(G')$. Thus, all the $k$-cliques in $G'$ share exactly $2k-2r+1$ common vertices.
\end{proof}

By Claim \ref{claim1}, we have all the $k$-cliques in $G'$ share exactly common $2k-2r+1$ vertices. We claim that $G'$ is $K_{k+1}$-free (\textbf{this fact will be used in Claim \ref{claim2.1}}). Otherwise, let $K_{k+1}$ be a subgraph of $G'$ and $V(K_{k+1})=\{v_1,\cdots,v_{k+1}\}$. Without loss of generality,  all the $k$-cliques in $G'$ share exactly $\{v_1,\cdots,v_{2k-2r+1}\}$, then there exists a $k$-clique $\{v_2, v_3,\cdots, v_{k+1}\}$ in $G'$ which does not share  $\{v_1,\cdots,v_{2k-2r+1}\}$, a contradiction. 

Since $G'$ is $K_{k+1}$-free , we have $\sum\nolimits_{i=k}^{2r-1}\mu_i(G')=\mu_k(G')$, it follows that the problem of the maximum value of the sum from $k$-clique spectral radius to $(2r-1)$-clique spectral radius is transformed into the problem of the maximum $k$-clique spectral radius. 

\begin{claim}\label{claim2.1}
  $G'$ is a connected graph.
\end{claim}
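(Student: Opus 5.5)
The plan is to use the structure from Claim \ref{claim1}. Every non-isolated vertex of $G'$ already lies in one component. So the only way $G'$ can fail to be connected is through isolated vertices, and these I would rule out by a cloning argument that contradicts the maximality of $G'$.

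\textbf{Step 1: the non-isolated vertices form one component.} Let $\mathcal{K}$ be the common kernel of all $k$-cliques of $G'$ given by Claim \ref{claim1}. It has $|\mathcal{K}|=2k-2r+1\ge 1$ since $k\ge r$; fix $v_1\in\mathcal{K}$. Every edge of $G'$ lies in some $k$-clique, and every $k$-clique contains $v_1$. Hence every non-isolated vertex either is $v_1$ or is adjacent to $v_1$. So all non-isolated vertices lie in a single component $H$ of $G'$. Since $G'$ has at least two $k$-cliques, $H$ is nontrivial, and each edge of $H$ lies in a $k$-clique. It therefore remains to show that $G'$ has no isolated vertex.

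\textbf{Step 2: cloning an isolated vertex.} Suppose $u$ is isolated in $G'$. Pick a $k$-clique $K_0$ of $G'$. Because $k\le 2r-2$, we have $k>2k-2r+1=|\mathcal{K}|$, so there is a vertex $w\in K_0\setminus\mathcal{K}$. Form $G''$ from $G'$ by joining $u$ to every vertex of $N_{G'}(w)$, so that $u$ becomes a non-adjacent twin of $w$. We then check the following properties of $G''$.
\begin{enumerate}
\item[(a)] Every new edge $uy$ lies in a $k$-clique. Indeed, $wy$ lies in a $k$-clique $Q$ of $G'$, and $(Q\setminus\{w\})\cup\{u\}$ is a $k$-clique of $G''$.
\item[(b)] The $k$-cliques of $G''$ are those of $G'$ together with the sets $(Q\setminus\{w\})\cup\{u\}$, where $Q\ni w$ is a $k$-clique of $G'$. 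This holds because $u\not\sim w$. Since $w\notin\mathcal{K}$, every such clique still contains $\mathcal{K}$, so any two $k$-cliques of $G''$ share at least $2k-2r+1$ vertices.
\item[(c)] By Lemma \ref{free}, $G''$ is $2K_r$-free.
\item[(d)] $G''$ is $K_{k+1}$-free, by the same argument used above for $G'$: a $(k+1)$-clique would contain a $k$-clique missing a kernel vertex.
\end{enumerate}
By (d), $\sum_{i=k}^{2r-1}\mu_i(G'')=\mu_k(G'')$.

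\textbf{Step 3: strict increase of $\mu_k$.} By Lemma \ref{dis}, and since isolated vertices contribute nothing, $\mu_k(G')=\mu_k(H)$. The graph $H'=G''[V(H)\cup\{u\}]$ is connected, because $u$ is adjacent to $v_1\in N(w)$. Every edge of $H'$ lies in a $k$-clique, and $H$ is a proper subgraph of $H'$. So Corollary \ref{55} gives $\mu_k(H)<\mu_k(H')$. Moreover $\mu_k(H')\le\mu_k(G'')$, because $\mathcal{A}_k(H')$, padded with zeros, is entrywise at most $\mathcal{A}_k(G'')$, and Lemma \ref{lem1} then applies. Hence
\[\sum_{i=k}^{2r-1}\mu_i(G'')=\mu_k(G'')>\mu_k(G')=\sum_{i=k}^{2r-1}\mu_i(G),\]
which contradicts the maximality of $G$, since $G''$ is a $2K_r$-free graph on $n$ vertices.

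The main point needing care is Step 2: checking that the cloned graph remains $2K_r$-free. This is exactly why $w$ must be chosen outside the kernel, since that choice keeps every new $k$-clique containing $\mathcal{K}$. Such a $w$ exists only because $k\le 2r-2$.
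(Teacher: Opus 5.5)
Your proof is correct and follows essentially the same route as the paper. Both arguments show that $G'$ is one nontrivial component plus isolated vertices, attach an isolated vertex so that every new $k$-clique still contains the kernel, and then apply Lemma \ref{free} and Corollary \ref{55}. The differences are minor: you obtain the single component directly from a kernel vertex rather than from $2K_r$-freeness, and you make $u$ a twin of a non-kernel vertex, whereas the paper joins $u$ to $k-1$ vertices of one $k$-clique containing the kernel, which creates exactly one new $k$-clique. You are also slightly more careful than the paper in applying Corollary \ref{55} to the connected component $H'$ rather than to the whole graph.
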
 
\begin{proof}
 Suppose that $G'$ is not connected. Without loss of  generality, let $G'$ has $m$ connected components $G_1',\cdots,G_m'$ and $\mu_k(G_1')=\mu_k(G')>0$ by Lemma \ref{dis}. If there exist a $k$-clique in  $G_i'$ for $2\le i\le m$, then we can find two disjoint  $r$-cliques from $G_1'$ and $G_i'$ respectively, which contradicts $G'$ is $2K_r$-free. Since each edge in $G'$  is contained in at least one $k$-clique, we have $G'$ is a union of a connected component $G_1'$ with $\mu_k(G_1')=\mu_k(G')>0$ and some isolated vertices. 

By Claim \ref{claim1}, let all the $k$-cliques in $G'$ share $\{u_1,\allowbreak\dots,\allowbreak u_{2k-2r+1}\}$ and
$ \{u_1,\allowbreak\dots,\allowbreak u_{2k-2r+1},\cdots,u_k\}$ be a $k$-clique of $G'$. For any isolated vertex $u_0$  of $G'$, we can add the edges between $u_0$ and $\{u_1,\cdots,u_{k-1}\}$ to obtain a new $k$-clique $\{u_0,u_1,\cdots,u_{k-1}\}$, and the new graph is denoted as $G_1''$. All
the $k$-cliques in $G_1''$ share $\{u_1,\allowbreak\dots,\allowbreak u_{2k-2r+1}\}$, by Lemma \ref{free} , $G_1''$ is $2K_r$-free. And by Corollary \ref{55}, we have $\mu_k(G_1'') > \mu_k(G_1') = \mu_k(G')$, a contradiction. Thus $G'$ is a connected graph.
\end{proof}

\begin{claim}\label{claim2}
   $G' \cong K_{2k-2r+1} \vee T_{2r-k-1}(n-2k+2r-1).$
\end{claim}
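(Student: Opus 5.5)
The plan is to follow the two-step structure announced in the proof sketch: first show that $G'$ is a complete $k$-partite graph, then show that this graph is exactly $K_{2k-2r+1}\vee T_{2r-k-1}(n-2k+2r-1)$.

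Set $s=2k-2r+1$ and $t=2r-k-1\ge 1$. By Claim \ref{claim1}, there is a kernel $\mathcal{K}=\{u_1,\dots,u_s\}$ contained in every $k$-clique of $G'$. By Claim \ref{claim2.1} and the fact that every edge of $G'$ lies in a $k$-clique, every vertex of $G'$ lies in some $k$-clique. Hence every vertex outside $\mathcal{K}$ is adjacent to all of $\mathcal{K}$, and $\mathcal{K}$ induces a clique.

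Let $W=V(G')\setminus\mathcal{K}$, so $|W|=n-s$. The $k$-cliques of $G'$ are exactly the sets $\mathcal{K}\cup T$ with $T$ a $t$-clique of $G'[W]$. Every edge inside $W$ lies in such a $T$. Since $G'$ is $K_{k+1}$-free, $G'[W]$ is $K_{t+1}$-free. Conversely, let $H$ be any $K_{t+1}$-free graph on $W$ and form $K_s\vee H$. All of its $k$-cliques contain the kernel, and it is $K_{k+1}$-free. After deleting edges of $H$ that lie in no $t$-clique, Lemma \ref{free} shows it is $2K_r$-free. So the problem reduces to maximizing $\mu_k(K_s\vee H)$ over $K_{t+1}$-free graphs $H$ on $n-s$ vertices.

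\textbf{Step 1.} Take the unique positive Perron vector $x$ of $\mathcal{A}_k(G')$, which exists by Lemma \ref{lem2} since $G'$ is connected and $k$-clique connected. Its components $x_v$ for $v\in W$ satisfy
\[
\mu_k x_v^{k-1}=\Big(\prod_{i=1}^s x_{u_i}\Big)\sum_{T\ni v}\ \prod_{w\in T\setminus\{v\}}x_w,
\]
where the sum runs over $t$-cliques $T$ of $G'[W]$. Via Lemma \ref{lem1}, $x^{\top}\mathcal{A}_k x^{k-1}=k\,\big(\prod_{i} x_{u_i}\big)\,P_t(x|_W)$, where $P_t$ is the $t$-clique polynomial of $G'[W]$. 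I would use the Zykov symmetrization argument of Yu and Peng \cite{Peng2025}, adapted to the weight $\prod x_{u_i}$. Take two nonadjacent vertices $v,w\in W$. Because $P_t$ is multilinear and $v,w$ share no clique, $P_t$ is linear in $(x_v,x_w)$ with coefficients $a_v,a_w$, the weighted $(t-1)$-clique counts of their neighbourhoods. If $a_v\ge a_w$, replace $w$ by a clone of $v$ (same neighbourhood, nonadjacent to $v$). This keeps $G'[W]$ $K_{t+1}$-free and does not decrease $x^{\top}\mathcal{A}_k x^{k-1}$ at the same vector $x$. By maximality of $G'$ and Corollary \ref{55} (strict monotonicity for the weakly irreducible tensor), one gets that nonadjacency is an equivalence relation on $W$. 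Otherwise, after cloning, the Rayleigh quotient strictly increases, or a proper supergraph appears, giving a contradiction. Hence $G'[W]$ is complete multipartite. Being $K_{t+1}$-free with every edge in a $t$-clique, it has exactly $t$ parts. Therefore $G'$ is complete $k$-partite, with $s$ singleton parts coming from $\mathcal{K}$.

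\textbf{Step 2.} By Lemma \ref{r-bu}, $\mu_k(G')=\big(\prod_{i=1}^{t}|V_i|\big)^{(k-1)/k}$ with $\sum_i |V_i|=n-s$. If two parts differ in size by at least $2$, moving one vertex from the larger to the smaller strictly increases the product. So maximality forces a balanced partition, which gives $G'\cong K_s\vee T_t(n-s)$.

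\textbf{Equality case.} The original $G$ is recovered from $G'$ by adding back edges lying in no $k$-clique. For $k\le 2r-2$, adding any edge to $K_s\vee T_t(n-s)$ joins two vertices of the same part of $T_t$. This creates a $k$-clique that misses a kernel vertex, or a $K_{k+1}$, and so produces a $2K_r$ by Lemma \ref{free}. Hence $G=G'$.

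The main obstacle is Step 1: making the symmetrization rigorous for tensors. The Perron vector changes after each cloning, so one must argue at the fixed vector $x$ via Lemma \ref{lem1}. One must also handle the tie case $a_v=a_w$ with strictness from the eigenvector equation, to exclude non-transitive nonadjacency.
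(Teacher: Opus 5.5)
Your plan is the paper's own: symmetrize at the fixed Perron vector $x$ via Lemma \ref{lem1}, deduce complete $k$-partiteness, then balance the parts with Lemma \ref{r-bu}. Peeling off the kernel first is a real convenience. In fact $K_s\vee H$ is $2K_r$-free for every $K_{t+1}$-free $H$ by counting alone: two disjoint cliques contain at most $s$ kernel vertices between them and at most $t$ vertices of $W$ each, so at most $s+2t=2r-1$ vertices. That is cleaner than your appeal to Lemma \ref{free}, whose hypothesis fails for edges from $\mathcal{K}$ to vertices of $W$ lying in no $t$-clique.

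However, Step 1 as written does not close. Single-pair cloning gives ``strict increase or proper supergraph'' only when $a_v\neq a_w$ or $N_W(v)$, $N_W(w)$ are nested. Suppose instead that $a_v=a_w$ and the neighbourhoods are incomparable. Then cloning $v$ onto $w$ yields a competitor with exactly the same value of the form at $x$, and it is not a supergraph of $G'$, so maximality gives no contradiction. The sentence ``one gets that nonadjacency is an equivalence relation'' is therefore asserted rather than proved, and that is precisely the content of Step 1. You flag this yourself but leave it open.

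The paper avoids ties by symmetrizing a non-transitive triple rather than a pair: $v\not\sim u$, $v\not\sim w$, $u\sim w$ in $W$. Write $\Pi=\prod_{i=1}^{s}x_{u_i}$.

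If $a_v<a_u$ (or $a_v<a_w$), clone $u$ (or $w$) onto $v$. This raises the form by $k\Pi x_v(a_u-a_v)>0$.

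Otherwise $a_v\ge a_u$ and $a_v\ge a_w$, and you clone $v$ onto both $u$ and $w$. The $t$-cliques through the edge $uw$ were counted in both $x_ua_u$ and $x_wa_w$, so the change is
\[
k\Pi\Big(x_u(a_v-a_u)+x_w(a_v-a_w)+\sum_{T}\prod_{y\in T}x_y\Big)>0,
\]
where $T$ runs over the $t$-cliques of $G'[W]$ containing $u$ and $w$. This sum is positive because $uw$ lies in some $t$-clique; for $t=1$, $G'[W]$ is edgeless and there is nothing to prove.

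Your eigenvector idea can also be completed. In the tie case, maximality forces $x$ to be a positive maximizer of the form for the cloned graph, hence (by the Lagrange condition) an eigenvector of its $k$-clique tensor for $\mu_k(G')$. Take any $z\in N_W(w)\setminus N_W(v)$. After cloning, the $z$-th coordinate of $\mathcal{A}_kx^{k-1}$ has dropped by the positive weight of the $k$-cliques through $zw$, and no new cliques through $z$ appear. This contradicts the eigen-equation.

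One of these two arguments has to be supplied. The rest of your proposal — exactly $t$ parts, balancing, and recovering $G=G'$ — is fine.
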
 

\begin{proof}
  By  Claim \ref{claim2.1} and Corollary \ref{cor0}, there exists a unique  positive eigenvector $x=(x_1,\cdots,x_n)^{\top}$  corresponding to \( \mu_k(G) \) with $x_1^k+\cdots+x_n^k=1$.
We will prove Claim \ref{claim2} in two steps. First, we prove $G'$ is a complete $k$-partite graph (see \textbf{Step 1} below). Second, we prove $G' \cong K_{2k-2r+1} \vee T_{2r-k-1}(n-2k+2r-1)$ (see \textbf{Step 2} below).

\textbf{Step 1: }$G'$ is a complete $k$-partite graph.

Suppose the contrary that  $G'$ is not a complete $k$-partite graph, which implies that it is a complete $t$-partite graph for $ t \ne k$ or there are non-adjacent vertices $u, v$ in $V (G') $ satisfying $N_{G'}(u)\ne N_{G'}(v)$.

\textbf{Case 1.1: }Suppose $G'$ is a complete $t$-partite graph, where $t\ne k$. If $k>t$, then we have $\mu_k(G') =0$. Obviously a contradiction. If $t > k$, then $G'$ has a copy of $K_{k+1}$, this contradicts that $G'$ is $K_{k+1}$-free.

\textbf{Case 1.2: }Suppose that  there exist two non-adjacent vertices $u$ and $v$ with $N_{G'}(u)\ne N_{G'}(v)$. Without loss of generality, let $w\sim u$ and $w\not \sim v$.

For any vertex $i\in[n]$, we define 
 \[ W_{G'}(i,x)     =  \sum\limits_{\{{i},{i_2},\cdots,{i_k}\}\in C_{k}\left( G' \right)}{x_{i_2}\cdots x_{i_k}    }.\]

\textbf{Case 1.2.1: }  $W_{G'}(v,x)<W_{G'}(u,x)$ or $W_{G'}(v,x)<W_{G'}(w,x)$.

 If $W_{G'}(v,x)<W_{G'}(u,x)$, we delete all the edges between $v$ and $N_{G'}(v)$, after that, connecting $v$ to all the vertices in $N_{G'}(u)$ and we get a $K_{k+1}$-free graph $H_1$. Now we prove that $H_1$ is $2K_r$-free.
  
 By Claim \ref{claim1}, without loss of generality, let all the $k$-cliques in $G'$ share $\{u_1,\allowbreak\dots,\allowbreak u_{2k-2r+1}\}$.  Recall that each edge in $G'$  is contained in at least one $k$-clique. Since $G'$ is a connected graph, we have $N_{G'}(u_1,\cdots,u_{2k-2r+1})=V(G')\backslash \{u_1,\cdots,u_{2k-2r+1}\}$ and 
 \[N_{H_1}(u_1,\cdots,u_{2k-2r+1})=V(H_1)\backslash \{u_1,\cdots,u_{2k-2r+1}\}.\]
 We claim that each $k$-clique in $H_1$ contains $\{u_1,\cdots,u_{2k-2r+1}\}$. Otherwise, suppose there exists a $k$-clique $K$  in $H_1$ such that $u'\notin K$ where $u'\in \{u_1,\cdots,u_{2k-2r+1}\}$. By $N_{H_1}(u_1,\cdots,u_{2k-2r+1})=V(H_1)\backslash \{u_1,\cdots,u_{2k-2r+1}\}$, we have $H_1[\{u'\}\cup K]$  is a copy of $K_{k+1}$, which  contradicts $G'$ is $K_{k+1}$-free. Thus $H_1$ is $2K_r$-free.
 
  By Lemma \ref{lem1}, we have 
\begin{eqnarray}
    \mu_k(H)& \ge & {{x}^{\top}} \mathcal{A}_k\left( {{H_1}} \right){{x}^{k-1}}  \nonumber   \\
     & = & {{x}^{\top}}\mathcal{A}_k\left( {G'} \right){{x}^{k-1}} -kx_vW_{G'}(v,x) +kx_vW_{G'}(u,x) \nonumber  \\
     & > & {{x}^{\top}} \mathcal{A}_k\left( {G'} \right){{x}^{k-1}} = \mu_k(G') . \nonumber
\end{eqnarray}
This contradicts the choice of $G'$. If $W_{G'}(v,x)<W_{G'}(w,x)$, the proof is similar. 

\textbf{Case 1.2.2: }  $W_{G'}(v,x)\ge W_{G'}(u,x)$ and $W_{G'}(v,x)\ge W_{G'}(w,x)$. 

We delete all the edges between $u$ and $N_{G'} (u)$, after that, connecting $u$ to all the vertices in $N_{G'} (v)$. And carry out the same operation on $w$. Then we get a $K_{r+1}$-free graph $H_2$.  Similar to the proof of Case 1.2.1, we have  $H_2$ is $2K_r$-free.  And by Lemma \ref{lem1}, we have 
\begin{eqnarray}
    {{\mu }_{k}}( {{H_2}})& \ge & {{x}^{\top}} \mathcal{A}_k\left( {{H_2}} \right){{x}^{k-1}}  \nonumber   \\
     & = & {{x}^{\top}}\mathcal{A}_k\left( {G'} \right){{x}^{k-1}} -kx_uW_{G'}(u,x)+ kx_uW_{G'}(v,x) \nonumber   \\
     & - &kx_wW_{G'}(w,x) +kx_wW_{G'}(v,x)+ k\sum\limits_{\{u,w,{i_3},\cdots,{i_k}\}\in C_{k}\left( G' \right)}x_ux_wx_{i_3}\cdots x_{i_k}. \nonumber    
\end{eqnarray}

Since $x$ is positive and each edge in $G'$  is contained in at least one $k$-clique, we have 
\[ \sum\limits_{\{u,w,{i_3},\cdots,{i_k}\}\in C_{k}\left( G' \right)}x_ux_wx_{i_3}\cdots x_{i_k}>0. \]
Thus, we have ${\mu }_{k}( {{H_2}})> \mu_k(G')$. It is also contradictory.

Through the discussions of the cases above, we can always get a  $2K_r$-free graph with larger $k$-clique spectral radius than $G'$, a contradiction.  Therefore, $G'$ is
 a complete  $k$-partite graph.
Next, we will prove $G' \cong K_{2k-2r+1} \vee T_{2r-k-1}(n-2k+2r-1).$

\textbf{Step 2: } $G' \cong K_{2k-2r+1} \vee T_{2r-k-1}(n-2k+2r-1).$

   By Step 1, we have $G'$ is a complete  $k$-partite graph with vertex classes $V_1,\ldots,V_{k}$. For $i\in[k]$, let $n_k=|V_k|$. Since all the $k$-cliques in $G'$ share exactly $2k-2r+1$ common vertices, then there is exactly $2k-2r+1$ partite sets with exactly one vertex. Without loss of  generality, let $ n_{2r-k}=\cdots=n_{k}=1$.
   
 By Lemma \ref{r-bu}, we have  $\mu_k(G')= \left(n_{1}n_{2}\cdots n_{2r-k-1}\right)^{\frac{k-1}{k}}$. Thus, for $i,j\in \{ 1,\cdots,2r-k-1\}$, when $|n_i-n_j|\le 1$, $\mu_k(G')$ is maximum. Thus,  $G' \cong K_{2k-2r+1} \vee T_{2r-k-1}(n-2k+2r-1).$ 
\end{proof}

In conclusion, For $3\le r\le k\le 2r-2$ and $n\ge (2r-k-1)\left(\mathrm{e}r^4k^{\frac{k}{2}}\right)^{\frac{k}{2k-2k+1}}+k $, we have $G'\cong  K_{2k-2r+1} \vee T_{2r-k-1}(n-2k+2r-1)$, and any new edge can not be added in $G'$, otherwise forming new $k$-cliques, a contradiction. Thus we have $G\cong  K_{2k-2r+1} \vee T_{2r-k-1}(n-2k+2r-1)$.  For $ k= 2r-1$, we have $G'\cong K_{2r-1}\cup(n-2r+1)K_1$, then  $|C_k(G)|=1$.
\end{proof}

Next, we will prove Theorem \ref{cor1}.

\begin{proof}[\textbf{Proof of Theorem \ref{cor1}}]
Let $G$ be a $2K_3$-free graph on $n$ vertices with maximal $3$-clique spectral radius. For $n=3,4,5$, it is easy to get $G\cong K_n$. Next we consider $n\ge 6$. 
We delete all the edges in $G$ which are not contained in any $3$-cliques, forming $G'$, and $\mu_3(G')=\mu_3(G)$. Then $G'$ is a $2K_3$-free graph on $n$ vertices with maximum $3$-clique spectral radius, and each edge in $G'$  is contained in at least one $3$-clique.

Since $G'$ is $2K_3$-free, we have $G'$ is $K_6$-free. Thus, we will consider the following three cases:

  \textbf{Case 1: }If $G'$ has a copy of $K_5$.
  
   Since each edge in $G'$  is contained in at least one $3$-clique, we have $G'\cong K_5\cup(n-5)K_1$. Otherwise, $G'$ has a copy of $2K_3$. 
   
  \textbf{Case 2: }If $G'$ is $K_5$-free and $G'$ has a copy of $K_4$.
  
  Let  $L=\{u,v,u_1,v_1\}$ be a 4-clique in $G'$ and $G''=G'[V(G')\backslash L]$. It is easy to obtain $G'' $ is $K_3$-free.  We claim that $G''$ is a union of some isolated vertices. Otherwise, let $\{a,b\}$ be an edge in $G''$, since  each edge in $G'$  is contained in at least one $3$-clique, we have a vertex (with loss of generality let $u$) in $L$ and $\{a,b\}$ form a 3-clique in $G'$. Then $\{a,b,u\}$ and $\{v,u_1,v_1\}$ are two disjoint 3-clique in $G'$, which contradicts $G'$ is $2K_3$-free.
  
  We claim that any vertex $w\in V(G'')$ is adjacent to at most three vertices of $L$ in $G'$. Otherwise, if $w$ is adjacent to all vertices in  $L$, then $\{w,u,v,u_1,v_1\}$ is a 5-clique in $G'$, which contradicts $G'$ is $K_5$-free.
  
  If there exists a vertex $w\in V(G'')$ such that $w$ is adjacent to three vertices in $L$, denoted as $\{u,v,u_1\}$, then we claim any vertices   in $V(G'')$ can not be adjacent to $v_1$.  Otherwise, suppose that there exists a
  vertex $w_1$ in $V(G'')$ such that $\{w_1,v_1\}$ is a edge of $G'$. Since $G'[V(G')\backslash \{u,v,u_1\}]$ is $K_3$-free and each edge in $G'$ is contained in at least one 3-clique, there exists a vertex in $\{u,v,u_1\}$ (without loss of generality let $u$) such that $\{w_1,v_1,u\}$ is a 3-clique. Then $\{w,v,u_1\}$ and $\{w_1,v_1,u\}$ are two disjoint 3-cliques in $G'$, which  contradicts $G'$ is $2K_3$-free. Thus, we have $G'\subseteq K_3\vee (n-3)K_1$ and $\mu_3(G')\le \mu_3(K_3\vee (n-3)K_1)$ by Corollary \ref{55}. Then $G'\cong K_3\vee (n-3)K_1$.

  If there does not exist a vertex $w\in V(G'')$ such that $w$ is adjacent to three vertices in $L$, then any $w_1\in V(G'')$ adjacent to at most two vertices in $L$. Thus, we have $|C_3(G')|\le 4+(n-4)=n$. Since $r=3$ and $\omega=4$, by Theorem \ref{Liu} we have
 \begin{eqnarray}
\mu_3(G')\le  \frac{r}{\omega}\tbinom{\omega}{r}^{\frac{1}{r}}|C_r(G)|^{\frac{r-1}{r}}= \frac{3}{4} \tbinom{4}{3}^{\frac{1}{3}}|C_3(G')|^{\frac{2}{3}}
\le \frac{3}{4}\tbinom{4}{3}^{\frac{1}{3}}n^{\frac{2}{3}}=\frac{3}{4}(2n)^{\frac{2}{3}}
.\nonumber
\end{eqnarray}

We will show $\mu_3(G')\le \frac{3}{4}(2n)^{\frac{2}{3}}< \mu_3(K_3\vee (n-3)K_1)$.
Let $g(n)=\mu_3(K_3\vee (n-3)K_1)^{\frac{3}{2}}-2(\frac{3}{4})^{\frac{3}{2}}n.$ By Lemma \ref{use2}, we have $\mu_3(K_3\vee (n-3)K_1)^{\frac{3}{2}}> 2\sqrt{3}(n-3)$. Then $g(n)>2\sqrt{3}(n-3)-2(\frac{3}{4})^{\frac{3}{2}}n.$ We can easily get $g(n)$ is monotonically increasing for $n>6$ and $g(6)>0$.
Since for $n\ge 6$, we have $\mu_3(G')<\frac{3}{4}(2n)^{\frac{2}{3}}< \mu_3(K_3 \vee (n-3)K_1)$, which contradicts $G'$ has maximum 3-clique spectral radius.

  \textbf{Case 3: } If $G'$ is $K_4$-free.
  
Since $G'$ is $2K_3$-free, for any $ K \in C_3(G')$, $G'[V(G') \backslash K]$ is $K_3$-free. Without loss of generality, let $\{u,v,w\}$ be a 3-clique in $G'$, $G''=G'[V(G') \backslash \{u,v,w\}]$. 

 We claim that any  $w_1\in V(G'')$ is adjacent to at most two vertices in $\{u,v,w\}$. Otherwise, if $w_1$ is adjacent to all vertices in $\{u,v,w\}$, then $\{u,v,w,w_1\}$ is a 4-clique in $G'$, which contradicts $G'$ is $K_4$-free. 
 
 If $|E(G'')|=0$, we have $G''$ is a union of some isolated vertices. Thus, we have $G'$ is a proper subgraph of $ K_3\vee (n-3)K_1$ and $\mu_3(G')<\mu_3(K_3\vee (n-3)K_1)$ by Corollary \ref{55}, a contradiction.

 Then $|E(G'')|>0$. Let $k_3(e)$ be the number of $K_3$ in $G'$ containing $e\in E(G'')$. We claim that $k_3(e)=1$ for any $e\in E(G')$. Otherwise, let $\{a,b\}$ be an edge in $G''$
 with $k_3(\{a,b\})\ge 2$. Without loss of generality, let  $u$ and $v$ be the two vertices in $\{u,v,w\}$ such that $\{a,b,u\}$ and $\{a,b,v\}$ are two 3-cliques in $G'$, then $\{a,b,u,v\}$ is a 4-clique in $G'$, which contradicts $G'$ is $K_4$-free.

Thus, we have each edge in $G''$ is contained in exactly one $3$-clique with the third vertex being one of $u,v,w$. Let
\begin{eqnarray}
&E_u& = \{ab\in E(G''): \{a,b,u\} \text{~~is a $3$-clique in~~} G'  \}, \nonumber \\
&E_v& = \{ab\in E(G''): \{a,b,v\} \text{~~is a $3$-clique in~~} G'  \}, \nonumber \\
&E_w& = \{ab\in E(G''): \{a,b,w\} \text{~~is a $3$-clique in~~} G'  \}. \nonumber
\end{eqnarray}
and  let 
\begin{eqnarray}
&V_{uv}& = \{y\in V(G''): \{u,v,y\} \text{~~is a $3$-clique in~~} G' \},  \nonumber \\
&V_{uw}& = \{y\in V(G''): \{u,w,y\} \text{~~is a $3$-clique in~~} G' \},  \nonumber \\
&V_{vw}& = \{y\in V(G''): \{v,w,y\} \text{~~is a $3$-clique in~~} G' \}. \nonumber 
\end{eqnarray}
Then $E(G'')=E_u\cup E_v\cup E_w$. For each $e\in E_u$, we say that $e$ has color $u$. Similarly for edges in $E_v$ and $E_w$.

For $i=1,2$, define $S_i=\{T: T \text{~~is a $3$-clique in~~}  G' \text{~~and~~} |V(T)\cap \{u,v,w\}|=i \}$. Then we have 
\[|C_3(G')|= |S_1|+|S_2|+1.\]
 
 \textbf{Case 3.1: } There exist two edges of different colors.

Assume that $\{a,b\}\in E_u$ and $\{c,d\}\in E_v$. If $\{a,b\}\cap \{c,d\}=\phi$, then $\{a,b,u\}$ and $\{c,d,v\}$ are two disjoint 3-clique in $G'$, which contradicts $G'$ is $2K_3$-free. Thus, $\{a,b\}\cap \{c,d\}=1$. Without
loss of generality, we suppose that  $b=d$. Since $G''$ is $K_3$-free, $\{a,c\}$ is not an edge. For each $\{y_1,y_2\}\in E(G'')$, if $\{y_1,y_2\}\cap\{a,b,c\}=\phi$, then the 3-clique containing $\{y_1,y_2\}$ is disjoint from one 3-clique of $\{a,b,u\}$ and $\{c,d,v\}$, which contradicts $G'$ is $2K_3$-free. Thus, each edge in $G''$ has a vertex in $\{a,b,c\}$. Let $A=N_{G''}(a)\backslash \{b\}$,  $B=N_{G''}(b)\backslash \{a,c\}$ and $C=N_{G''}(c)\backslash \{b\}$. Since $G''$ is $K_3$-free, we have $A\cap B=\phi$ and $B\cap C=\phi$. Thus 
\[ |A|+|B|\le n-6 \text{~~and~~} |B|+|C|\le n-6.   \]
Then, 
\[ |S_1|= |A|+|B|+|C|+2\le 2(n-6)+2-|B|\le 2n-10. \]

Since $|V(G'')|= n-3$ and any $w_1\in V(G'')$ is adjacent to at most two vertices in $\{u,v,w\}$, we have
\[|C_3(G')|= |S_1|+|S_2|+1\le 2n-10+n-3+1=3n-12. \]
Since $r=3$ and $\omega=3$, by Theorem \ref{Liu}, we have
 \begin{eqnarray}
\mu_3(G') \le  \frac{r}{\omega}\tbinom{\omega}{r}^{\frac{1}{r}}|C_r(G')|^{\frac{r-1}{r}} = |C_3(G')|^{\frac{2}{3}}
\le (3n-12)^{\frac{2}{3}}
.\nonumber
\end{eqnarray}

We will show $\mu_3(G') \le (3n-12)^{\frac{2}{3}}< \mu_3(K_1\vee T_2(n-1))$.
Let $f(n)=(\frac{n-3}{2})^2-(3n-12)$, which  is monotonically increasing for $n> 14$ and $f(14)>0$. Then for $n\ge 14$, we have $\mu_3(G') \le (3n-12)^{\frac{2}{3}} < (\frac{n-3}{2})^{\frac{4}{3}} < ( \lfloor \frac{n-1}{2} \rfloor  \lfloor \frac{n}{2} \rfloor )^{\frac{2}{3}}=\mu_3(K_1\vee T_2(n-1))$. For $6\le n\le 13$, by  direct calculation we can obtain $\mu_3(G') \le (3n-12)^{\frac{2}{3}}  < ( \lfloor \frac{n-1}{2} \rfloor  \lfloor \frac{n}{2} \rfloor )^{\frac{2}{3}}=\mu_3(K_1\vee T_2(n-1))$, which contradicts $G'$ has maximum 3-clique spectral radius.

  \textbf{Case 3.2: } Each edge in $G''$ has the same color.
 
Without loss of generality, let $E(G'')=E_u$ and $\{a,b\}$ be an edge in $G''$. We claim $V_{vw}=\emptyset$. Otherwise, let $x\in V_{vw}$. If $x\notin \{a,b\}$, then  $\{u,a,b\}$ and $\{x,v,w\}$ are two disjoint 3-cliques in $G'$, which contradicts $G'$ is $2K_3$-free. If $x=a( x=b)$, then  $\{u,v,w,a\}(\{u,v,w,b\})$ is a 4-clique in $G'$, which contradicts $G'$ is $K_4$-free.
Thus, $V_{vw}=\emptyset$, which means all the 3-cliques in $G'$  share exactly one vertex $u$. 

Similar to the proof of  Claim \ref{claim2.1}, we can obtain  $G'$ is connected. Since all the 3-cliques in $G'$  share exactly one vertex $u$ and $G'$ is connected, similar to the proof of  Claim \ref{claim2}, we can obtain $G'\cong   K_{1}\vee T_{2}(n-1)$.

Through our discussion on Cases 1, 2 and 3, we have identified all possible extremal graphs that attain the maximum 3-clique spectral radius. Next, we compare the 3-clique spectral radius of these extremal graphs to determine which is largest.

Let $G_1= K_1\vee T_2(n-1)$, $G_2= K_3\vee (n-3)K_1 $ and  $G_3= K_5\cup (n-5)K_1$. $\mu_3(G_1)$ and  $\mu_3(G_2)$
have been given in Corollary \ref{use} and Lemma \ref{use2}.
We can easily  obtain $\mu_3(G_3)=\mu_3(K_5)=6$.

We claim that

$(i)$ For $n=6$, $\mu_3(G_3)> \mu_3(G_1)$ and $\mu_3(G_3)> \mu_3(G_2)$.

$(ii)$ For $7\le n\le 13$, $\mu_3(G_2)> \mu_3(G_1)$ and $\mu_3(G_2)> \mu_3(G_3)$.

$(iii)$ For $n\ge 14$, $\mu_3(G_1)> \mu_3(G_2)$ and $\mu_3(G_1)> \mu_3(G_3)$.  

By Lemma \ref{r-bu}, $\mu_3(G_1)=  ( \lfloor \frac{n-1}{2} \rfloor  \lfloor \frac{n}{2} \rfloor )^{\frac{2}{3}}$. Since  $r=3$, $|C_3(G_2)|=3n-8$ and $\omega(G_2)=4$  , by Theorem \ref{Liu}, we 
   have 
\begin{eqnarray}
\mu_3(G_2)\le  \frac{r}{\omega}\tbinom{\omega}{r}^{\frac{1}{r}}|C_r(G_2)|^{\frac{r-1}{r}}= \frac{3}{4} \tbinom{4}{3}^{\frac{1}{3}}(3n-8)^{\frac{2}{3}}
.\nonumber
\end{eqnarray}

  Let $f(n)=(\frac{n-3}{2})^{2}-2(\frac{3}{4})^{\frac{3}{2}}(3n-8)$, which is monotonically increasing for $n>19$ and $f(19)>0$, then for $n\ge 19$, we have $\mu_3(G_3)=6<\mu_3(G_2)\le \frac{3}{4} \tbinom{4}{3}^{\frac{1}{3}}(3n-8)^{\frac{2}{3}} <(\frac{n-3}{2})^{\frac{4}{3}}<( \lfloor \frac{n-1}{2} \rfloor  \lfloor \frac{n}{2} \rfloor )^{\frac{2}{3}}=\mu_3(G_1)$. For $6\le n\le 18$, we can get the conclusion of our claim by  direct calculation.

For $G'\cong K_1 \vee T_{2}(n-1)$ or  $G'\cong K_3\vee (n-3)K_1$, any new edge can not be added in $G'$, otherwise forming new $3$-clique, a contradiction. Thus we have $G\cong K_1 \vee T_{2}(n-1)$ or  $G\cong K_3\vee (n-3)K_1$, respectively.

For $G'\cong K_5\cup (n-5)K_1$ and $n=6$, we have $G\cong K_{5}\cup K_1$ or $G\cong G_0$, where $G_0$ is obtained by adding an edge between a vertex in $K_5$ and an isolated vertex.
 \end{proof}
 
\setlength{\parindent}{0pt} \textit{\textbf{Acknowledgment:}} The research of the first author is partially supported by the National Natural Science Foundation of China (No.12071097) and the Natural Science Foundation for The Excellent Youth Scholars of the Heilongjiang Province (No.YQ2022A002).

\section*{References}

\bibliography{refer}

\end{spacing}
\end{document}